\newtheorem{theorem}{Theorem}
\newtheorem{lemma}[theorem]{Lemma}
\newtheorem{prop}[theorem]{Proposition}
\newtheorem{cor}[theorem]{Corollary}
\theoremstyle{remark}
\newtheorem*{remark}{Remark}
\newcommand{\R}{\mathbb R}
\newcommand{\C}{\mathbb C}
\newcommand{\Z}{\mathbb Z}
\newcommand{\Q}{\mathbb Q}
\newcommand{\A}{\mathbb A}
\newcommand{\p}{\mathfrak p}
\newcommand{\Pp}{\mathfrak P}
\newcommand{\q}{\mathfrak q}
\newcommand{\g}{\mathfrak g}
\newcommand{\n}{\mathfrak{n} }
\newcommand{\Cc}{\mathfrak{c} }
\newcommand{\Sp}{\text{Sp} }
\newcommand{\Mp}{\text{Mp} }
\newcommand{\OO}{\mathcal O}
\newcommand{\Ss}{\mathcal S}
\newcommand{\VV}{\mathcal V}
\newcommand{\End}{\text{End}}
\newcommand{\tr}{\text{tr}}
\newcommand{\Hom}{\textup{Hom}}
\newcommand{\Vol}{\text{Vol}}
\newcommand{\tG}{ \widetilde{G} }
\begin{document}

\title{Theta Lifting and Cohomology Growth in $p$-adic Towers}

\author{Mathieu Cossutta}
\email{mcossutta@gmail.com}

\author{Simon Marshall}
\address{The Institute for Advanced Study\\
Einstein Drive\\
Princeton\\
NJ 08540, USA}
\email{slm@math.princeton.edu}
\thanks{The second author is supported by NSF grant DMS-0635607.}

\begin{abstract}
We use the theta lift to study the multiplicity with which certain automorphic representations of cohomological type occur in a family of congruence covers of an arithmetic manifold.  When the family of covers is a so-called `$p$-adic congruence tower' we obtain sharp asymptotics for the number of representations which occur as lifts.  When combined with theorems on the surjectivity of the theta lift due to Howe and Li, and Bergeron, Millson and Moeglin, this allows us to verify certain cases of a conjecture of Sarnak and Xue.
\end{abstract}

\maketitle

\section{Introduction}

The purpose of this paper is to quantify results of J.-S. Li \cite{Li2} which establish the existence of certain automorphic representations of cohomological type on the classical groups.  This problem was previously considered by the first author in \cite{Co}, and this paper provides a sharp form of the results proven there.  To recall Li's results, let $F$ be a totally real field and let $(E, \iota)$ be an extension of $F$ with an involution $\iota$ of one of the following types:

\begin{eqnarray*}
E = \Bigg\{ \begin{array}{ll} F, & \text{case } 1, \\ \text{a quadratic extension } E/F, & \text{case } 2, \end{array}
\end{eqnarray*}

\begin{eqnarray*}
\iota = \Bigg\{ \begin{array}{ll} \text{id}, & \text{case } 1, \\ \text{the Galois involution of } E/F, & \text{case } 2. \end{array}
\end{eqnarray*}
We shall use $v$ and $w$ to denote places of $F$ and $E$ respectively, and denote the completion of $F$ at $v$ by $F_v$ (resp. $E$ at $w$ by $E_w$).  If $v$ is non-Archimedean, we shall say that a Hermitian space $V$ over $E$ is unramified at $v$ if $v$ does not divide 2 and the Hermitian space $V_v = V \otimes_F F_v$ over the algebra $E \otimes_F F_v$ contains a self-dual lattice $L$.  Note that this condition is automatically satisfied in the case where $v$ splits in $E$.

Let $V$ and $V'$ be finite dimensional vector spaces over $E$ with dimensions $n$ and $n'$ respectively, which we equip with non-degenerate sesqui-linear forms $( \: , )$ and $( \: , )'$.  We assume that $n' \le n$, and that $( \: , )$ is $\eta$-Hermitian and $( \: , )'$ is $-\eta$ -Hemitian for $\eta \in \{ \pm 1 \}$.  We may equip the space $W = V \otimes_E V'$ with the symplectic form

\begin{equation*}
\langle \:, \rangle = \tr_{E/F} ( ( \: , ) \otimes ( \: , )' ),
\end{equation*}
and define $G$, $G'$ and $\Sp(W)$ to be the isometry groups of $( \: , )$, $( \: , )'$ and $\langle \:, \rangle$ respectively, considered as algebraic groups over $F$.  Then $(G, G')$ is a type I irreducible dual pair inside $\Sp(W)$.  Likewise, if $G(\R)$ and $G'(\R)$ are the real points of the groups obtained after restricting scalars to $\Q$ then $(G(\R), G'(\R))$ is a type I irreducible dual pair inside the corresponding real symplectic group.  We shall assume in case 1 that both $V$ and $V'$ have even dimension; this ensures that the metaplectic cover of $\Sp(W)$ splits on restriction to $G \times G'$ by a result of Kudla \cite{K}.  Let $L \subset V$ be an $\OO_E$ lattice, whose completion in $V_v$ we shall assume to be self dual whenever $V$ is unramified, and $\Gamma \subset G(\R)$ the arithmetic group stabilising $L$.  Let $r$ be the dimension of the maximal isotropic subspace of $V$, and define

\begin{eqnarray*}
d & = & \Bigg\{ \begin{array}{ll} 0, & \text{ case 1, $\eta = 1$,} \\ 1, & \text{ case 1, $\eta = -1$,} \\ \tfrac{1}{2}, & \text{ case 2,} \end{array} \\
\delta & = & \bigg\{ \begin{array}{ll} 0, & \text{ case 1,} \\ 1, & \text{ case 2.} \end{array}
\end{eqnarray*}

In \cite{Li5}, Li characterises the representations $\pi$ of $G(\R)$ which are of cohomological type and are realised as the local theta lift of a discrete series representation from a smaller group $G'(\R)$ (which must be unique).  The set of such representations is quite large, and in particular is the entire cohomological dual in the cases where $G(\R)$ is $SO(n,1)$ or $U(n,1)$.  Moreover, in \cite{Li2} he proves that if $\pi$ is such a representation and $n > 2n' + 4d -2$ then $\pi$ is automorphic, i.e. there exists a congruence subgroup $\Gamma' \subset \Gamma$ such that $\pi$ occurs discretely in $L^2( \Gamma' \backslash G(\R) )$.  We shall state Li's results more precisely in section \ref{thetacoh}.\\

We shall state our results in adelic terms in order to avoid technicalities coming from the failure of strong approximation in orthogonal groups.  Let $\A$ be the ring of adeles of $F$, $\A_f$ the ring of finite adeles, and for $L \subset V$ a lattice which is self-dual in $V_v$ wherever possible and $\n \subset \OO_F$ an ideal, we define $K(\n)$ to be the standard family of compact subgroups of $G(\A_f)$ with respect to $L$ (notations to be defined more precisely in section \ref{thetacoh}).  If $G^0$ is the subgroup of elements with trivial determinant in $G$ and $K_\infty \subset G(\R)$ is the standard maximal compact subgroup, define

\begin{eqnarray*}
X(\n) & = & G(F) \backslash G(\A) / K(\n), \\
X^0(\n) & = & G^0(F) \backslash G^0(\A) / K^0(\n), \\
Y(\n) & = & X(\n) / K_\infty, \quad Y^0(\n) = X^0(\n) / K_\infty.
\end{eqnarray*}
As will be discussed in section \ref{thetacoh}, $X^0(\n)$ is connected in case 1 with $\eta = -1$ and case 2, and in case 1 with $\eta = 1$ has a bounded number of components if we assume that $\n = \Cc \p^k$ with $\Cc$ and $\p$ fixed.  Define $L^2_\theta ( X(\n) ) \subset L^2 ( X(\n) )$ to be the subspace generated by twists of all automorphic forms on $G$ which are in the image of the theta lift from a group $G'$ with $n'$ satisfying $n > 2n' + 4d -2$, and let $m_\theta( \pi, \n)$ be the multiplicity with which $\pi$ occurs in $L^2_\theta ( X(\n) )$.  Define $m_\theta^0( \pi, \n)$ to be the analogous multiplicity for $X^0(\n)$.  We aim to quantify Li's existence theorem by determining the asymptotic growth rate of $m_\theta^0( \pi, \n)$ for $\n$ of the form $\Cc \p^k$ as $k \rightarrow \infty$, where $\p$ is a sufficiently unramified prime which is inert in $E$ and $\pi$ is one of the cohomological forms considered by Li in \cite{Li5}.

We shall state our theorem with the aid of two sets of places $S$ and $S'$ of $F$.  We define $S$ to contain the Archimedean places, the primes above those which ramify in $E/\Q$, and the places at which the Hermitian space $V$ is ramified (in particular, all places above 2).  $S'$ may be any set of places containing $S$ and such that the $S'$-units of $F$ assume all possible combinations of sign at the Archimedean places.  Throughout the paper, we shall use $\Pp$ to denote the prime of $E$ lying above $\p$.  Our main theorem may then be stated as follows.

\begin{theorem}
\label{mainthm}
For $\p \notin S$ which is inert in $E$ and any ideal $\Cc \subset \OO_F$, we have

\begin{equation}
\label{mainupper}
m_\theta^0( \pi, \Cc \p^k ) \ll (N \Pp)^{nn'k/2}.
\end{equation}
Moreover, this upper bound is sharp after multiplying $\Cc$ by a suitable product of primes in $S'$.

\end{theorem}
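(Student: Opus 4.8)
The plan is to reduce both halves of the theorem to a single local statement at $\p$: that the representation $\pi_\p$ of $G(F_\p)$ --- which, since $\pi$ lies in $L^2_\theta$, is the local theta lift from $G'(F_\p)$ of a representation $\pi'_\p$ determined by $\pi_\p$ via Howe duality --- satisfies
\[
\dim \pi_\p^{K_\p(\p^k)} \asymp (N\Pp)^{nn'k/2},
\]
and likewise for $K^0_\p(\p^k)$, up to the bounded factor $[G(F_\p):G^0(F_\p)]$. Granting this, the upper bound is almost immediate: $\pi$ has finite multiplicity $m^0_{\mathrm{disc}}(\pi)$ in $L^2_{\mathrm{disc}}(G^0(F)\backslash G^0(\A))$, and since $K^0(\Cc\p^k)$ factors as $K^0_\p(\p^k)$ times a fixed level away from $\p$ depending only on $\Cc$, one gets
\[
m_\theta^0(\pi,\Cc\p^k) \le m^0_{\mathrm{disc}}(\pi)\cdot \dim\pi_f^{K^0(\Cc\p^k)} \ll_{\pi,\Cc} \dim\pi_\p^{K^0_\p(\p^k)} \ll (N\Pp)^{nn'k/2}.
\]

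The local statement is the heart of the matter. Since $\p\notin S$ is inert in $E$, the dual pair $(G(F_\p),G'(F_\p))$ and the Weil representation $\omega_\p$ are unramified, $\omega_\p$ is realised on $\Ss(X(F_\p))$ for a Lagrangian $X\subset W$ defined over $\OO_{F_\p}$, and $K_\p(\p^k)\subset G(F_\p)\subset\Sp(W)(F_\p)$ is a principal congruence subgroup. The big theta lift $\Theta_\p(\pi'_\p)$ is obtained from $\omega_\p$ by passing to $\pi'_\p$-covariants in the $G'(F_\p)$-variable; since taking $K_\p(\p^k)$-invariants is exact and commutes with this, $\dim\Theta_\p(\pi'_\p)^{K_\p(\p^k)}$ equals the multiplicity of $\pi'_\p$ in the $G'(F_\p)$-representation $\omega_\p^{K_\p(\p^k)}$, and $\dim\pi_\p^{K_\p(\p^k)}\le\dim\Theta_\p(\pi'_\p)^{K_\p(\p^k)}$ because $\pi_\p$ is a quotient of $\Theta_\p(\pi'_\p)$. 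The growth in $k$ of such fixed-vector dimensions in a finitely generated smooth representation of $G(F_\p)$ is governed by its Gelfand--Kirillov (equivalently canonical) dimension; for a theta lift this is read off from the moment map $W\to\g^*$ and the explicit Schr\"odinger model --- essentially a count of $K_\p(\p^k)$-orbits on the support of the relevant Schwartz functions, cut down by the $G'(F_\p)$-integration --- and the numerical bookkeeping (the inertness converting $N\p$ into $N\Pp=(N\p)^{[E:F]}$, balanced against $\dim_{F_\p}X(F_\p)=\tfrac12\dim_{F_\p}W(F_\p)$ in each of cases $1$ and $2$) produces exactly the exponent $nn'/2$. This gives the local upper bound; running the computation in reverse, the Schwartz functions involved are seen to remain linearly independent in $\pi_\p$ once $\pi_\p$ is the full theta lift --- which holds in the stable range $n>2n'+4d-2$ by Howe and Li --- giving the matching local lower bound $\dim\pi_\p^{K_\p(\p^k)}\gg(N\Pp)^{nn'k/2}$.

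For sharpness it remains to place $\pi$ in $L^2_\theta(X^0(\Cc'\p^k))$ for a suitable $\Cc'=\Cc\prod_{\p_i\in S'}\p_i^{a_i}$ and all large $k$; then $m_\theta^0(\pi,\Cc'\p^k)\ge\dim\pi_f^{K^0(\Cc'\p^k)}\gg(N\Pp)^{nn'k/2}$ by the local lower bound. By \cite{Li5} the component $\pi_\infty$ is the theta lift of a unique discrete series $\pi'_\infty$ of a real form $G'(\R)$, and Howe duality at the finite places determines the remaining $\pi'_v$; the hypothesis that the $S'$-units of $F$ realise every combination of signs at the Archimedean places is exactly what is needed, via a Hasse-principle argument for Hermitian forms, to choose a global Hermitian space $V'/E$ with isometry group $G'$ whose localisations match the prescribed $V'_v$ for all $v$ --- in particular with the correct signature at $\infty$ and compatible with $V$ at the places of $S$ --- at the cost of modifying $V'$, equivalently the level, only at primes of $S'$, which is recorded by the factor $\prod_{\p_i\in S'}\p_i^{a_i}$. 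Li's existence theorem \cite{Li2}, applicable since $n>2n'+4d-2$, then makes the corresponding automorphic $\pi'$ occur on $G'$, and the nonvanishing of its global theta lift to $G$ --- via the Rallis inner product formula together with the stable-range nonvanishing of Howe and Li, the local theta lift being nonzero at every place, in particular at $\p$ where everything is unramified --- places $\pi$ in $L^2_\theta(X^0(\Cc'\p^k))$, completing the proof.

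The main obstacle is the local computation of the previous-to-last paragraph: pinning the growth exponent of $\dim\pi_\p^{K_\p(\p^k)}$ to exactly $nn'/2$ in both directions requires a careful analysis of the restriction of $\omega_\p$ to $G(F_\p)\times G'(F_\p)$ and of its interaction with the congruence filtration, along with the numerical bookkeeping separating the orthogonal/symplectic and unitary cases and the passage between $G$, $G^0$ and the twists in the definition of $L^2_\theta$; the global input needed for sharpness is less delicate but still requires choosing the inner form of $G'$ so that the theta lift lands in the prescribed tower.
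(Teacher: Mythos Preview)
There is a fundamental misreading of the statement. In the theorem, $\pi$ is a fixed unitary representation of $G(\R)$ (one of Li's cohomological representations $A_\q(\lambda)$), and $m_\theta^0(\pi,\n)$ is the multiplicity of this Archimedean type in $L^2_\theta(X^0(\n))$. It is therefore a sum over \emph{all} automorphic representations of $G$ lying in $L^2_\theta$ whose infinite component is isomorphic to $\pi$; equivalently (see equation (\ref{mtheta})), over all characters $\alpha$, all admissible $V'$, and all cuspidal automorphic representations $\pi'$ of $G'(\A)$ with $\pi'_\infty\simeq\pi_\q$. Your argument treats $\pi$ as a single fixed automorphic representation of $G(\A)$ and bounds $m_\theta^0(\pi,\Cc\p^k)$ by $m^0_{\mathrm{disc}}(\pi)\cdot\dim\pi_\p^{K^0_\p(\p^k)}$; this controls one term of the sum, not the sum itself, and the upper bound collapses.

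The local claim you reduce to is also false. For a single irreducible admissible $\pi_\p=\theta(\pi'_\p,V_\p)$, the growth of $\dim\pi_\p^{K_\p(\p^k)}$ is governed by the Gelfand--Kirillov dimension of $\pi_\p$, which in the stable range is strictly smaller than $nn'/2$: taking $G'$-coinvariants in $\omega_\p$ cuts the dimension by an amount comparable to $\alpha(V')$, while the GK dimension of $\pi'_\p$ recovers at most the dimension of the big Bruhat cell of $G'$, which is strictly less than $\alpha(V')$. Concretely, the paper proves (section \ref{proof}) that $\dim\theta(\pi',V)^{K(\p^k)}\gg N\Pp^{(nn'/2-\alpha(V'))k}\dim\pi'^{K'(\p^k)}$ for each $\pi'$, and the exponent $nn'/2$ is only achieved after summing over the family $\{\pi'_{\p,i}\}$ and invoking a limit multiplicity input on $G'$ (Savin \cite{Sv}, sharpened in Lemma \ref{trivbd}) to the effect that $\sum_i\dim\pi'^{K'(\p^k)}_{\p,i}\sim N\Pp^{\alpha(V')k}$. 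This global input on the $G'$ side --- bounding $K'_\p$-type multiplicities in the cuspidal spectrum of $G'$ of fixed Archimedean type --- is the step entirely absent from your outline, and without it neither the upper nor the lower bound can be reached.
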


\begin{remark}
It will be seen in the course of the proof that the upper bound in theorem \ref{mainthm} continues to hold in case 1 if $\eta = 1$ and $n$ is odd.  The only point at which we use our parity assumption is when we split the metaplectic cover on restriction to the dual pair, so that we may produce forms on $G(\A)$ and apply the multiplicity results of Savin \cite{Sv} in section \ref{repredn}.  In case 1 with $\eta = 1$ and $n$ odd, the Weil representation has a very simple restriction to $\tG(\A)$ so that we may still produce forms on $G(\A)$.  Moreover, the forms on $\tG'(F) \backslash \tG'(\A)$ that we are lifting still have their Archimedean components in the discrete series, and so have nonzero $(\g, K)$ cohomology.  Proposition \ref{trivbd} is therefore still valid, and the proof of the upper bound goes through as before.  The lower bound relies on the results of Savin, and we do not know whether these continue to hold for the metaplectic quotient $\tG'(F) \backslash \tG'(\A)$.
\end{remark}

\begin{remark}
It was stated in section 2.5 of \cite{Co} that the lower bounds on $m_\theta$ given there only hold under the additional assumption that $n > 5n'/2 + 1$.  This is in fact unnecessary, and its only purpose was to guarantee the stable range assumption $n > 2n' + 4d - 2$ while being simpler in form.
\end{remark}

\subsection{The Conjecture of Sarnak and Xue}

For any representation $\pi$ of $G(\R)$, let $p(\pi)$ denote the infimum over the set of $p \ge 2$ such that all $K$-finite matrix coefficients of $\pi$ lie in $L^p( G(\R) )$.  There is a conjecture of Sarnak and Xue \cite{SX} which asserts that

\begin{equation}
\label{sarnakxue}
m^0( \pi, \n ) \ll_\epsilon \Vol( X^0(\n) )^{2 / p(\pi) + \epsilon},
\end{equation}
and this may be proven in a number of cases by combining theorem \ref{mainthm} with results of Howe and Li \cite{H, Li1}, and Bergeron, Millson and Moeglin \cite{BMM} on the surjectivity of the theta lift.  If $\pi$ occurs in the Archimedean theta correspondence for the pair $(G(\R), G'(\R))$ as the lift of a discrete series representation, then one may show using theorem 3.2 of \cite{Li4} that

\begin{equation}
\label{exponent}
\frac{2}{ p(\pi) } = \frac{n'}{ n - 2 + 2d}.
\end{equation}
In comparison, theorem \ref{mainthm} may be expressed in terms of the volume of $X^0( \Cc \p^k)$ as

\begin{equation*}
m_\theta^0( \pi, \Cc \Pp^k ) \ll \Bigg\{ \begin{array}{ll} \Vol( X^0( \Cc \p^k) )^{n'/ (n-\eta) }, & \text{case } 1, \\ \Vol( X^0( \Cc \p^k) )^{nn' / (n^2-1) }, & \text{case } 2. \end{array}
\end{equation*}
It can be seen that these exponents are always strictly less than those of (\ref{exponent}), so that theorem \ref{mainthm} provides a strengthening of the Sarnak-Xue conjecture when the theta lift is surjective.  We now describe the two cases in which this is known.\\

The results of Howe and Li \cite{H, Li1} state that $m_\theta( \pi, \n) = m(\pi, \n)$ if $\pi$ occurs in the Archimedean correspondence with a representation of $G'(\R)$ with $G'(\R)$ compact and $n' < r$.  Note that this forces us to either be in case 1 with $\eta = -1$ or in case 2, so that $G$ is symplectic or unitary.  In the case of the symplectic group $\Sp_{2n}(\R)$, the representations which satisfy this condition are exactly the singular holomorphic representations of rank $< n$.  Note that the rank of a representation of $\Sp_{2n}(\R)$ must be either $n$ or an even integer $< n$; see \cite{H, Li1} for more information about these singular forms.  As an example, we may apply theorem \ref{mainthm} to the lattice $\Gamma = \Sp_{2n}(\Z)$ and its principal congruence subgroups $\Gamma(N)$ to deduce the following:

\begin{cor}
\label{singular}
If $\pi$ is a singular holomorphic representation of $\textup{Sp}_{2n}(\R)$ of rank $n' < n$ and $p \neq 2$, we have

\begin{equation*}
m( \pi, c p^k ) \ll p^{nn'k}.
\end{equation*}
Moreover, this asymptotic bound is sharp if $c$ is divisible by a sufficiently high power of 2.

\end{cor}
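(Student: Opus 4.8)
The plan is to apply Theorem \ref{mainthm} to the dual pair $(G, G') = (\Sp_{2n}, O(n'))$ over $F = \Q$, in which $G$ is the symplectic group $\Sp_{2n}(\R)$ and $G'(\R) = O(n')$ is compact. This sits in case 1 with $\eta = -1$, so that $d = 1$ and $\delta = 0$, and the space $V$ here plays the role of the symplectic space of dimension $2n$ in the notation of the excerpt---so the quantity called $n$ in Theorem \ref{mainthm} is $2n$, and the quantity called $n'$ is the dimension of the orthogonal space, which by Li's description of the singular holomorphic representations equals the rank $n'$ of $\pi$. By the results of Howe and Li \cite{H, Li1} cited above, the condition $n' < r$ (where $r$ is the Witt index, equal to $n$ for the split symplectic space $\Sp_{2n}(\Z)$) forces $m_\theta(\pi, \n) = m(\pi, \n)$, so the $\theta$-multiplicity and the full multiplicity coincide and Theorem \ref{mainthm} controls the latter. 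One checks that $\Sp_{2n}(\Z)$ is indeed the arithmetic group stabilising a self-dual lattice $L$ in the standard symplectic space, and that for $p \neq 2$ the prime $p$ lies outside the set $S$ (which here consists only of the Archimedean place and the place $2$, since $E = F = \Q$ is unramified everywhere and the symplectic space is unramified away from $2$); the condition that $p$ be ``inert in $E$'' is vacuous in case 1. Translating the congruence structure, the principal congruence subgroup $\Gamma(c p^k)$ corresponds to the level $\n = cp^k$, and $N\Pp = N\p = p$.

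With these identifications, the upper bound \eqref{mainupper} reads $m_\theta^0(\pi, cp^k) \ll (N\Pp)^{n_{\mathrm{big}} n'_{\mathrm{small}} k / 2}$ where $n_{\mathrm{big}} = 2n$ and $n'_{\mathrm{small}} = n'$, i.e. the exponent is $(2n)(n') k / 2 = nn'k$, giving $m_\theta^0(\pi, cp^k) \ll p^{nn'k}$. One remaining point is to pass from the multiplicity $m_\theta^0$ on the connected double-coset space $X^0(\n)$ to the multiplicity $m$ for the congruence subgroup $\Gamma(cp^k) \subset \Sp_{2n}(\Z)$: since $\Sp_{2n}$ satisfies strong approximation and has trivial class number, $X^0(\n)$ is connected and is precisely $\Gamma(cp^k)\backslash \Sp_{2n}(\R)$ up to the usual identifications, so $m(\pi, cp^k) = m_\theta^0(\pi, cp^k)$ and the upper bound follows directly. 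The sharpness clause follows from the sharpness clause of Theorem \ref{mainthm}: multiplying $\Cc$ (here the integer $c$) by a suitable product of primes from $S'$ makes the bound sharp, and since $S'$ need only be large enough that the $S'$-units realise all sign patterns, we may take $S' = \{\infty, 2\}$ (as $-1$ and any unit of the form $\pm 1$ are already handled, and $2$-units suffice), so the auxiliary primes may be taken to be powers of $2$; hence the bound is sharp when $c$ is divisible by a sufficiently high power of $2$.

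The bulk of the work is therefore bookkeeping: matching the abstract dual-pair data and level structures of Theorem \ref{mainthm} to the concrete setting of $\Sp_{2n}(\Z)$ and its principal congruence subgroups, and invoking Howe--Li to identify $m$ with $m_\theta$. I expect the main (though still routine) obstacle to be verifying that the hypotheses of the Howe--Li surjectivity result genuinely apply---in particular that the singular holomorphic representation $\pi$ of rank $n' < n$ really does arise in the Archimedean theta correspondence from the compact group $O(n')$ with $n' < r = n$, so that $m_\theta(\pi, \n) = m(\pi, \n)$ rather than merely $m_\theta \le m$---and checking that the stable-range inequality $n_{\mathrm{big}} > 2 n'_{\mathrm{small}} + 4d - 2$, i.e. $2n > 2n' + 2$, i.e. $n > n'+1$, is compatible with Li's classification; if the classification only guarantees $n' < n$ rather than $n' \le n - 2$, a brief separate argument (or a remark that the borderline case $n' = n-1$ is covered by the discussion following Theorem \ref{mainthm}, since there $\eta = -1$ and the parity hypothesis is automatic) is needed to close the gap.
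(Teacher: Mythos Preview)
Your plan is correct and is exactly how the paper obtains the corollary: it is stated immediately after the Howe--Li surjectivity discussion as a direct specialisation of Theorem~\ref{mainthm} to the pair $(\Sp_{2n},O(n'))$ over $\Q$, with no separate proof given. Your bookkeeping (the $n\leadsto 2n$ translation, $\delta=0$, strong approximation for $\Sp_{2n}$, and $S'=\{\infty,2\}$ so that the auxiliary primes reduce to powers of $2$) matches what the paper leaves implicit; your observation about the stable-range borderline $n'=n-1$ is a genuine wrinkle the paper does not comment on, though note that since $n'$ must be even it only arises for odd~$n$.
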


The singular holomorphic representations of rank $n'$ are the ones which contribute via Matushima's formula to the $L^2$ cohomology of $Y( c p^k) = \Gamma(c p^k) \backslash \Sp_{2n}(\R) / U(n)$ in bidegree $( n' (4n - n' + 2)/8, 0)$ (note that this corrects an error in theorem 0.1 of \cite{Co}).  We may therefore rephrase corollary \ref{singular} as stating that

\begin{equation*}
\dim H_{(2)}^{t(2n-t+1)/2}( Y(cp^k), \C) \ll \Vol( Y(cp^k) )^{t/(2n+1/2)}
\end{equation*}
for all $t < n/2$, and we may deduce similar results in the case of $\Sp_{2n}(\OO_F)$ for arbitrary $F$.  In the case of singular forms of rank 2 on $\Sp_{2n}(\Z)$, $m(\pi, N )$ was exactly determined by Li in \cite{Li3}.  In particular, he proves that $m(\pi, N ) = 0$ unless $4 | N$ or $p | N$ with $p \equiv -1 \; (4)$, which demonstrates that the upper bound in our Theorem is not always attained.\\

The second surjectivity Theorem that we shall apply is due to Bergeron, Millson and Moeglin \cite{BMM}, and is presently conditional on the stabilisation of the trace formula for the (disconnected) groups $GL_n \rtimes \langle \theta \rangle$ and $SO_{2n} \rtimes \langle \theta' \rangle$, where $\theta$ and $\theta'$ are the outer automorphisms.  They consider congruence arithmetic lattices in $SO(d,1)$ which arise from a nondegenerate quadratic form over a totally real number field, and prove that $m_\theta( \pi, \n) = m(\pi, \n)$ for all $\pi$ which contribute to the cohomology of $Y(\n)$ in degree $i < \lfloor d/2 \rfloor /2$.  Note that they place no restriction on the parity of $d$.  When we combine this result with Theorem \ref{mainthm} and the subsequent remark in the case of odd dimensional orthogonal spaces, this implies

\begin{cor}
Assume the results set out in section 1.18 of \cite{BMM}.  Let $Y(\n)$ be compact arithmetic hyperbolic manifolds of dimension $d$ which arise from a quadratic form over a totally real field.  If $\Cc$ and $\p$ are as in Theorem \ref{mainthm} and $i < \lfloor d/2 \rfloor /2$, we have

\begin{equation*}
\dim H^i( Y( \Cc \p^k), \C) \ll \textup{Vol}( Y( \Cc \p^k) )^{2i / d}.
\end{equation*}

\end{cor}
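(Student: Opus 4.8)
The plan is to combine Matsushima's formula, Li's classification of cohomological theta lifts on $SO(n,1)$, the surjectivity theorem of Bergeron, Millson and Moeglin, and the upper bound of Theorem~\ref{mainthm} (in the form extended by the Remark on odd-dimensional orthogonal spaces). Write $Y(\n) = X(\n)/K_\infty$ for $G = SO(V)$, where $V$ is a quadratic space over a totally real field with $\dim V = n = d+1$, definite at all but one real place, where it has signature $(d,1)$; being compact, $Y(\n)$ is a finite disjoint union of hyperbolic $d$-manifolds, whose number of components stays bounded as $\Cc\p^k$ varies by the discussion preceding Theorem~\ref{mainthm}. Matsushima's formula gives
\begin{equation*}
\dim H^i(Y(\n),\C) = \sum_\pi m(\pi,\n)\,\dim H^i(\g,K_\infty;\pi_\infty),
\end{equation*}
the sum being over irreducible automorphic representations $\pi$ of $G(\A)$. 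Only finitely many isomorphism classes of $\pi_\infty$ satisfy $H^i(\g,K_\infty;\pi_\infty)\neq 0$, and for each of these $\dim H^i(\g,K_\infty;\pi_\infty)$ is bounded absolutely; so it is enough to bound $\sum_\pi m(\pi,\n)$ over the $\pi$ with $\pi_\infty$ of such a cohomological type.

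Since $G(\R) = SO(d,1)$, Li \cite{Li5} realises every cohomological $\pi_\infty$ as the Archimedean theta lift of a discrete series representation of a symplectic group $\Sp(V')(\R)$, and this lift has its $(\g,K_\infty)$-cohomology vanishing in all degrees below $n'/2$, where $n' = \dim V'$ (first occurrence being in degree $n'/2$). Thus any $\pi$ contributing to $H^i(Y(\n),\C)$ is such a lift with $n' \le 2i$. For $i < \lfloor d/2\rfloor/2$ the stable-range hypothesis of Theorem~\ref{mainthm}, which in case~1 with $\eta = 1$ reads $n > 2n'-2$, is satisfied for all these $n'$, since $n' \le 2i < d/2$ forces $2n'-2 < d-2 < n$; and in exactly this range the theorem of Bergeron, Millson and Moeglin (section~1.18 of \cite{BMM}) gives $m(\pi,\n) = m_\theta(\pi,\n)$ for every such $\pi$. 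Hence $\sum_\pi m(\pi,\n)$ equals the total multiplicity, inside $L^2_\theta(X(\n))$, of the cohomological theta lifts contributing to $H^i$, all of which have $n' \le 2i$.

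It remains to apply Theorem~\ref{mainthm}, invoking the Remark to treat even $d$, for which $n = d+1$ is odd (when $d$ is odd, $n$ is even and, since the contributing $n'$ are automatically even, the theorem applies directly). Passing between $X(\n)$ and $X^0(\n)$, between $Y(\n)$ and $X(\n)$, and between $m$ and $m^0$ costs only bounded factors, and the relevant volumes are all comparable; the volume-normalised form of the upper bound recorded after~(\ref{exponent}) bounds the multiplicity of the lifts of a given type $n'$ by $\Vol(X^0(\Cc\p^k))^{n'/(n-\eta)} = \Vol(X^0(\Cc\p^k))^{n'/d}$, and the largest exponent occurring is $2i/d$. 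Combining this with the reduction above gives
\begin{equation*}
\dim H^i(Y(\Cc\p^k),\C) \ll \sum_\pi m_\theta^0(\pi,\Cc\p^k) \ll \Vol(X^0(\Cc\p^k))^{2i/d} \ll \Vol(Y(\Cc\p^k))^{2i/d},
\end{equation*}
as required. The step requiring most care — and which I regard as the main obstacle — is the passage from the per-representation multiplicity bound literally asserted in Theorem~\ref{mainthm} to a bound on the whole cohomological part of $L^2_\theta(X(\n))$: this is legitimate because the upper-bound argument controls the entire space of theta lifts of a given type in terms of automorphic forms on the smaller group $G'$, rather than the multiplicity of a single $\pi$. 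One should also verify carefully, from Li's description, that the cohomology of each such lift genuinely vanishes below degree $n'/2$, so that the exponent $n'/d$ is indeed at most $2i/d$ for every contributing representation.
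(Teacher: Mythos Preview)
Your proof is correct and follows exactly the approach the paper intends: the paper itself gives no detailed proof of this corollary, merely stating that it follows by combining the Bergeron--Millson--Moeglin surjectivity theorem with Theorem~\ref{mainthm} and the subsequent Remark on odd-dimensional orthogonal spaces.  You have filled in the details accurately, including the identification $n = d+1$, the stable-range check $n > 2n'-2$, and the translation of the exponent $n'/(n-\eta) = n'/d$ into $2i/d$ via the fact that the cohomological representation of $SO(d,1)$ contributing to $H^i$ (for $i < d/2$) is the lift from $\Sp_{n'}$ with $n' = 2i$.

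One small clarification regarding your closing caveat: the quantity $m_\theta^0(\pi,\n)$ in Theorem~\ref{mainthm} is already defined as the total multiplicity of the Archimedean type $\pi$ inside $L^2_\theta(X^0(\n))$, summed over all automorphic representations with that $\pi_\infty$; so no additional argument is needed to pass from a per-automorphic-representation bound to a bound on the full cohomological contribution.  Your concern there is already addressed by the way the theorem is stated.
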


The structure of the paper is as follows.  We shall begin by recalling the construction of the global theta lift and its dependence on certain auxiliary data in section \ref{thetareview}.  In section \ref{thetacoh} we shall give precise statements of Li's results on the existence of cohomological forms, and review the methods used by the first author in \cite{Co} to bound their multiplicities.  Section \ref{proof} presents the modifications which we have made to the arguments of \cite{Co} in order to make them sharp.

{\bf Acknowledgements}: The authors would like to thank Nicolas Bergeron for making them aware of his work with Millson and Moeglin.  In addition, the second author would like to thank Peter Sarnak for many helpful discussions, and the Institute for Advanced Study for its generous support while carrying out this work.

This material is based upon work supported by the National Science Foundation under agreement No. DMS-6035607.  Any opinions, findings and conclusions or recommendations expressed in this material are those of the authors and do not necessarily reflect the views of the National Science Foundation.

\section{Review of the Theta Lift}
\label{thetareview}

\subsection{The Local Theta Lift}

Let $G(F_v)$, $G'(F_v)$ and $\Sp(W_v)$ denote the $F_v$-valued points of the groups $G$, $G'$ and $\Sp(W)$.  Define $\Mp(W_v)$ to be the metaplectic cover of $\Sp(W_v)$ satisfying

\begin{equation*}
1 \longrightarrow S^1 \longrightarrow \Mp(W_v) \longrightarrow \Sp(W_v) \longrightarrow 1,
\end{equation*}
and let $\tG(F_v)$ and $\tG'(F_v)$ be the inverse images of $G(F_v)$ and $G'(F_v)$ in $\Mp(W_v)$.  Because we are not in case 1 with one of $n$ and $n'$ odd, Kudla \cite{K} has proven that the covering $\tG(F_v) \times \tG'(F_v) \rightarrow G(F_v) \times G'(F_v)$ is trivial.  In particular, he has proven that one may associate a section

\begin{equation*}
i_\chi : G(F_v) \times G'(F_v) \longrightarrow \tG(F_v) \times \tG'(F_v)
\end{equation*}
to any character $\chi$ of $(E \otimes_F F_v)^\times$ whose restriction to $F_v$ is the quadratic character of the extension $E_w / F_v$.

For each choice of nontrivial additive character $\psi$ of $F_v$ there exists a Weil representation $\omega_\psi$ of $\Mp(W_v)$ in which $S^1$ acts via the standard character, see \cite{H2, W}.  We denote the restriction of $\omega_\psi$ to $G(F_v) \times G'(F_v)$ under $i_\chi$ by $\omega_\chi$; we shall discuss the dependence of this representation on $\psi$ and $\chi$ in section \ref{globthetareview}.  If $\pi_v$ is an irreducible admissible representation of $G'(F_v)$, we define

\begin{equation*}
\Theta( \pi_v, V_v) = ( \pi_v \otimes \omega_\chi )_{G'(F_v)}.
\end{equation*}
$\Theta( \pi_v, V_v)$ is a representation of $G(F_v)$, and in \cite{H2} it was conjectured by Howe that $\Theta( \pi_v, V_v)$ admits a unique irreducible quotient.  This is known in amost every case by work of Howe \cite{H2}, Waldspurger \cite{MVW} and Li \cite{Li4}.

\begin{theorem}
Howe's conjecture is true if:

\begin{itemize}

\item $v$ is Archimedean (Howe, \cite{H2}).

\item $v$ is non-Archimedean and does not divide 2 (Waldspurger, \cite{MVW}).

\item $v$ is non-Archimedean and the pair $(G'(F_v), G(F_v))$ is in the `stable range'.  If $v$ splits in $E$ this means that $n > 2n'$, and otherwise that $V_v$ contains an isotropic subspace of dimension at least $n'$ (Li, \cite{Li4}).  Both of these are guaranteed by the inequality $n > 2n' + 4d -2$.

\end{itemize}

\end{theorem}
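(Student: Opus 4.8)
The statement is a synthesis of three results from the literature, so I would prove it by assembling them and then checking one short numerical implication.

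For the first item I would invoke Howe's original analysis of the oscillator representation over an Archimedean field \cite{H2}: for any reductive dual pair $(G'(F_v),G(F_v))$ with $F_v=\R$ or $\C$, the maximal $\pi_v$-isotypic quotient of $\omega_\chi$ is admissible of finite length, and the resulting representation $\Theta(\pi_v,V_v)$ of $G(F_v)$ has a unique irreducible quotient. For the second item I would quote Waldspurger's theorem, in the form written up in \cite{MVW}, establishing Howe duality for $p$-adic dual pairs whenever the residue characteristic of $F_v$ is odd; the essential inputs are the computation of Jacquet modules of the Weil representation together with Kudla's filtration, all of which are available away from $p=2$. For the third item I would cite Li's study of the correspondence in the stable range \cite{Li4}, where $\Theta(\pi_v,V_v)$ is in fact irreducible, hence a fortiori has a unique irreducible quotient, as soon as the pair is stable --- which, when $v$ splits in $E$ (so that $G(F_v)$ is essentially a general linear group), amounts to $n>2n'$, and otherwise amounts to $V_v$ containing an isotropic subspace of the dimension demanded by Li's criterion.

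It then remains to verify that the hypothesis $n>2n'+4d-2$ forces the pair $(G'(F_v),G(F_v))$ to be stable, which I would do by running through the three values of $d$. When $v$ splits in $E$ we are in case $2$, where $4d-2=0$, so the hypothesis is literally the stability condition $n>2n'$. When $V_v$ is symplectic --- case $1$ with $\eta=-1$, so $d=1$ --- its Witt index is $n/2$, and $n>2n'+2$ gives comfortably more than what is needed. In the remaining cases $V_v$ is a Hermitian ($v$ inert in case $2$) or quadratic (case $1$, $\eta=1$) space which is \emph{unramified} at $v$, because $v\notin S$ and hence $V$ carries a self-dual lattice there; reducing such a lattice modulo the maximal ideal and invoking the Chevalley--Warning theorem and Hensel's lemma shows that the anisotropic kernel of $V_v$ has dimension at most $2$, and at most $1$ in the Hermitian situation, so that the Witt index of $V_v$ is at least $\lfloor n/2\rfloor-1$. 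One then checks case by case that this is at least the quantity appearing in Li's stability criterion: in case $2$ inert this uses $n>2n'$ directly, while in case $1$ with $\eta=1$ it uses the parity hypothesis --- which upgrades $n>2n'-2$ to $n\ge 2n'$ since $n$ and $n'$ are even --- together with the fact that the relevant member $G'(F_v)$ is itself symplectic.

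The only step that calls for genuine care, the rest being bookkeeping around theorems already in print, is this last one: spelling out exactly what ``stable range'' means in \cite{Li4} for each of the dual pairs in play, and confirming that the margin of $4d-2$ in the hypothesis --- which is narrowest in case $1$ with $\eta=1$ --- is precisely enough once the parity of $n$ and $n'$ and the unramifiedness of $V_v$ are brought to bear.
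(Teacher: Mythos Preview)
The paper offers no proof of this theorem; it is stated as a summary of results in the cited literature, and the closing sentence (``Both of these are guaranteed by the inequality $n>2n'+4d-2$'') is simply asserted. Your plan --- assemble the three citations and then check that numerical implication --- is therefore exactly the right approach, and already supplies more than the paper does.

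Your verification of the implication, however, has a gap in case~1 with $\eta=1$. First, you import the hypothesis $v\notin S$ so as to use unramifiedness of $V_v$, but the theorem places no such restriction on $v$; indeed the stable-range clause is needed precisely at the places $v\mid 2$, all of which lie in $S$. Second, even at an unramified place the argument is incomplete: an even-dimensional quadratic space over $F_v$ admitting a self-dual lattice can still be of minus type, with Witt index $n/2-1$, so from $n\ge 2n'$ you get only Witt index $\ge n'-1$, short of the $n'$ that Li's criterion requires. Your closing appeal to ``the relevant member $G'(F_v)$ is itself symplectic'' does not visibly close this, since Li's stable-range condition is a condition on the Witt index of $V_v$, not of $V'_v$. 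At an arbitrary place the shortfall is worse, because the anisotropic kernel of a $p$-adic quadratic form can have dimension up to $4$, so $n\ge 2n'$ yields only Witt index $\ge n'-2$. Either a sharper reading of Li's hypothesis in the orthogonal--symplectic case is needed, or the inequality as stated is a little too weak there; in any event the verification does not go through as written, and this is the one place where your proposal requires genuine repair.
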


In the cases where it is known to exist, we shall denote the unique irreducible quotient of $\Theta ( \pi_v, V_v)$ by $\theta ( \pi_v, V_v)$.

\subsection{The Global Theta Lift}
\label{globthetareview}

Let $\A$ and $\A_E$ denote the rings of adeles of $F$ and $E$ respectively.  Let $\Sp(W, \A)$, $G(\A)$ and $G'(\A)$ denote the adelic points of the groups $\Sp(W)$, $G$ and $G'$, and define $\Mp(W, \A)$ to be the metaplectic cover of $\Sp(W, \A)$ satisfying

\begin{equation*}
1 \longrightarrow S^1 \longrightarrow \Mp(W, \A) \longrightarrow \Sp(W, \A) \longrightarrow 1.
\end{equation*}
The restriction of this covering to $\Sp(W, F)$ has a unique section $i_F$, and we shall frequently consider $\Sp(W, F)$ as a subgroup of $\Mp(W, \A)$ by means of this section.  Let $\tG(\A)$ and $\tG'(\A)$ be the inverse images of $G(\A)$ and $G'(\A)$ in $\Mp(W, \A)$.  As in the local case, we may use the results of Kudla to associate a section

\begin{equation}
\label{Kudlasection}
i_\chi : G(\A) \times G'(\A) \longrightarrow \tG(\A) \times \tG'(\A)
\end{equation}
to any character $\chi$ of $\A_E^\times / E^\times$ whose restriction to $\A^\times$ is the quadratic character of the extension $E/F$.  We digress to prove the following compatability result for $i_\chi$ and $i_F$, which allows us to state Theorem \ref{mainthm} in a simple form.

\begin{lemma}
\label{agreement}
The sections $i_\chi$ and $i_F$ agree on $G(F) \times G'(F)$.
\end{lemma}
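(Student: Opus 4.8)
The plan is to reduce the statement to a purely local computation at each place $v$ of $F$ by unwinding the definitions of the two sections. Both $i_F$ and $i_\chi$ are sections of the metaplectic cover $\Mp(W,\A) \to \Sp(W,\A)$, but over different subgroups: $i_F$ is the canonical splitting over the rational points $\Sp(W,F)$, whose existence and uniqueness is classical (it comes from the theta distribution / the reciprocity law for the Weil index), while $i_\chi$ is Kudla's splitting over $G(\A) \times G'(\A)$ attached to the character $\chi$. To compare them on $G(F) \times G'(F)$, I would first observe that the metaplectic cover, when realized via a cocycle, is built place-by-place from the local Rao cocycles together with a global product formula: the product over all $v$ of the local Weil indices $\gamma_v$ is $1$, which is exactly what makes $i_F$ well-defined on $\Sp(W,F)$. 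Similarly, Kudla's section $i_\chi = \prod_v i_{\chi_v}$ is a product of the local sections, each built from the local character $\chi_v$ and a local normalization of the Weil index (the factors $\gamma(\chi_v, \psi_v)$ appearing in Kudla's splitting formulae).

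The key steps, in order, would be: (1) recall Kudla's explicit formula for the local section $i_{\chi_v}$ on $G(F_v) \times G'(F_v)$, which expresses the metaplectic discrepancy in terms of local splitting data $\beta_{\chi_v}$ depending on $\chi_v$, $\psi_v$, and the Hasse invariant / discriminant of the forms; (2) recall the explicit formula for the canonical rational section $i_F$ in the same coordinates; (3) for $g \in G(F) \times G'(F) \subset \Sp(W,F)$, write both $i_\chi(g)$ and $i_F(g)$ as the class of $g$ together with an $S^1$-valued factor, and show the two factors coincide. The factor for $i_\chi$ is a product over $v$ of Kudla's local terms, and the factor for $i_F$ is $1$ (by definition of the canonical rational splitting); so what must be shown is that the global product of Kudla's local normalizing factors equals $1$ for rational arguments. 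This follows from the product formula for the local Weil indices $\prod_v \gamma_v(\psi_v) = 1$ together with the product formula for the Hilbert symbol and the fact that $\chi$ is a global (automorphic) character of $\A_E^\times/E^\times$, so that the contributions of $\chi_v(x)$ for $x \in E^\times$ multiply to $\chi(x) = 1$. Concretely, Kudla in \cite{K} normalizes the local splittings precisely so that their product is the canonical global splitting, so the lemma is essentially a bookkeeping statement extracting that compatibility from his construction.

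The main obstacle I expect is purely notational: matching up the conventions in Kudla's paper \cite{K} (which phrases everything in terms of doubled spaces, Rao's cocycle, and Weil indices with a particular normalization) with the conventions used here, and being careful about the case distinctions ($\eta = \pm 1$, case $1$ versus case $2$, the role of the auxiliary character $\chi$ versus a character used to split an orthogonal-symplectic pair). In particular one must check that the parity hypotheses imposed earlier ($V$ and $V'$ even-dimensional in case $1$) are exactly what guarantee that the relevant local Weil-index factors are themselves characters (not genuine metaplectic data), so that the product formula applies cleanly. Once the normalizations are aligned, no real computation remains: the equality of $i_\chi$ and $i_F$ on $G(F) \times G'(F)$ is forced by the uniqueness of the rational splitting $i_F$ together with the fact that $g \mapsto i_\chi(g)\, i_F(g)^{-1}$ is a homomorphism $G(F) \times G'(F) \to S^1$ which is trivial at every place, hence trivial. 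I would therefore structure the proof as: exhibit $i_\chi|_{G(F)\times G'(F)}$ as a section of the cover over $\Sp(W,F)$ lifting the identity, invoke the uniqueness of such a section (which is $i_F$), and conclude.
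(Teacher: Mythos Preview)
Your overall strategy --- write both sections in a common cocycle model and reduce the comparison to product formulas for the Weil index, the Hilbert symbol, and the automorphy of $\chi$ --- is the same as the paper's. The paper carries this out explicitly: it first re-derives the global triviality $\prod_v \gamma_v(a,\eta_v)=1$ by an argument with the Rao cocycle on $\Sp_2$, then writes $i_F(g)=(g,\gamma_F(\eta)^{-j(g)})$ in that model and checks directly that $i_\chi$ lands in $(g,1)$ on rational points.

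However, your closing shortcut has a genuine gap. You end by saying that $g\mapsto i_\chi(g)\,i_F(g)^{-1}$ is a homomorphism $G(F)\times G'(F)\to S^1$ which is ``trivial at every place, hence trivial,'' and separately that uniqueness of the rational splitting $i_F$ finishes the argument. Neither works as stated. First, $i_F$ is a global object with no local factorisation (there is no splitting of $\Mp(W_v)$ over $\Sp(W,F_v)$ in general), so ``trivial at every place'' is not meaningful for this ratio. Second, uniqueness of the rational splitting is a statement about sections over the full group $\Sp(W,F)$, which is perfect; it says nothing about the smaller group $G(F)\times G'(F)$, which in the orthogonal and unitary cases has plenty of characters to $S^1$ via the determinant. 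So after the product-formula computation you still owe an argument that the residual character of $G(F)\times G'(F)$ is trivial.

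The paper supplies exactly this missing step, case by case: on a symplectic factor the character is trivial because $\Sp_{2n}(F)$ admits no nontrivial characters; on an orthogonal factor it uses that the orthogonal group sits inside the Siegel parabolic of $\Sp(W)$, so the Bruhat-cell function $j$ is constant and the two sections visibly agree; in the unitary case it observes that $G(F)$ and $G'(F)$ each meet the big Bruhat cell of $\Sp(W)$ in a Zariski-open set, where both sections are seen to equal $(g,1)$, forcing the difference character to vanish identically. You should replace your uniqueness appeal with one of these structural arguments.
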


\begin{proof}

We shall adopt the notation of Kudla \cite{K} during the course of the proof.  We begin by showing that the Weil index is globally trivial, which is to say that if $a \in F$ and $\eta$ is an additive adele class character, and $\gamma_v( a_v, \eta_v)$ is the local Weil index of $a_v$ and $\eta_v$ with respect to the field $F_v$, then

\begin{equation*}
\gamma_F(a, \eta) := \prod_v \gamma_v( a_v, \eta_v) = 1.
\end{equation*}
We shall prove this by studying the metaplectic cocycle of $\Sp_2(\A)$, following Ranga Rao \cite{RR} and Kudla.

Let $Y \subset F^2$ be a one dimensional subspace, and let $c_Y$ be the metaplectic cocycle of $\Sp_2(\A)$ which is the product of the local cocycles constructed by Ranga Rao using the completions of $Y$.  We use this cocycle to identify $\Mp(W, \A)$ with $\Sp(W,\A) \times S^1$.  Choose a basis $\langle v_1, v_2 \rangle$ for $F^2$ with $v_1 \in Y$.  If $g \in \Sp_2(F)$, express $g$ with respect to the basis $\langle v_1, v_2 \rangle$ as

\begin{equation*}
g = \left( \begin{array}{cc} a & b \\ c & d \end{array} \right),
\end{equation*}
and define $x(g)$ to be $c$ if $c \neq 0$ and $d$ otherwise, and $j(g)$ to be 0 if $c = 0$ and 1 otherwise.  If $\psi$ is the adele class character used to define the global Weil representation, let $\eta$ be related to $\psi$ by $\eta(x) = \psi(x/2)$.  We then define the function $\beta_V : \Sp_2(F) \rightarrow \C^\times$ by

\begin{equation*}
\beta_V(g) = \gamma_F( x(g), \eta)^{-1} \gamma_F( \eta)^{-j(g)}.
\end{equation*}
By applying Theorem 3.1 of \cite{K} in case $1_+$ with $V = F$ and $W = F^2$, and observing that the normalised cocycle $c_Y^0$ appearing there is globally trivial by Hilbert's reciprocity law and the global triviality of the Hasse invariant, we see that

\begin{eqnarray}
\notag
c_Y(g_1, g_2) & = & \partial \beta_V(g_1, g_2), \quad g_1, g_2 \in \Sp_2(F) \\
\label{ratsplit}
& = & \beta_V( g_1 g_2) \beta_V(g_1)^{-1} \beta_V(g_2)^{-1}.
\end{eqnarray}
It follows that the unique section $\Sp_2(F) \rightarrow \Sp_2(\A)$ is given by $g \mapsto (g, \beta_V(g))$.\\

Now let $Y' = Y \alpha$ be any other one dimensional subspace, where $\alpha \in \Sp_2(F)$, and let $c_Y'$ be the associated adelic cocycle.  It is proven in lemma 4.2 of \cite{K} that $c_Y' = c_Y \partial \lambda$, where $\lambda$ is the 1-cochain given by

\begin{equation*}
\lambda(g) = c_Y( \alpha, g \alpha^{-1}) c_Y(g, \alpha^{-1}), \quad g \in \Sp_2(\A).
\end{equation*}
Specialising this to rational $g$ and applying (\ref{ratsplit}) gives $\lambda(g) = \beta_V(\alpha)^{-1} \beta_V( \alpha^{-1})^{-1}$, so that $\lambda$ is constant on the rational points.  Moreover, if $\beta_V'$ is defined analogously to $\beta_V$ with respect to $Y'$ then we have

\begin{equation*}
\partial \beta_V' = \partial \beta_V \partial \lambda.
\end{equation*}
$\beta_V'$ and $\beta_V \lambda$ therefore differ by a character of $\Sp_2(F)$, and because $\Sp_2(F)$ admits no nontrivial characters we in fact have $\beta_V' = \beta_V \lambda$. Evaluating both expressions at the identity matrix gives $\lambda = 1$, so $\beta_V = \beta_V'$.  We may easily find $g \in \Sp_2(F)$ and $Y, Y'$ such that $j(g) = j'(g) = 1$, and so that $x(g)$ assumes any value we choose while $x'(g) = 1$.  We therefore have

\begin{equation*}
\gamma_F( x(g), \eta)^{-1} \gamma_F( \eta) = \beta_V(g) = \beta_V'(g) = \gamma_F( x'(g), \eta)^{-1} \gamma_F( \eta) = \gamma_F( \eta)
\end{equation*}
as required.\\

We now consider an arbitrary pair $G \times G' \subset \Sp(W)$.  Let $Y \subset W$ be a rational maximal isotropic subspace, and $c_Y$ the adelic metaplectic cocycle as before.  We use $c_Y$ to identify $\Mp(W, \A)$ with $\Sp(W, \A) \times S^1$, and let $\chi$ and $i_\chi$ be as in (\ref{Kudlasection}).

In case 2, it may easily be checked using the global triviality of the Weil index that $i_\chi$ is trivial on $G(F) \times G'(F)$, so that

\begin{equation*}
i_\chi(g_1, g_2) = (g_1 g_2, 1), \quad g_1 \times g_2 \in G(F) \times G'(F).
\end{equation*}
On the other hand, by applying case $1_+$ of Theorem 3.1 of \cite{K} as before and using our observation that $c_Y = \partial \beta_V$ on $\Sp(W,F)$, we see that

\begin{equation*}
i_F(g) = (g, \gamma_F(\eta)^{-j(g)} ), \quad g \in \Sp(W,F),
\end{equation*}
where $j(g)$ is determined by the Bruhat cell of $g$ in $\Sp(W)$ and we again have $\eta = \tfrac{1}{2} \psi$.  The restriction of $i_F$ to $G(F) \times G'(F)$ must differ from $i_\chi$ by a character, and this character must be trivial because $G(F)$ and $G'(F)$ both intersect the largest Bruhat cell of $\Sp(W)$ in a Zariski open set.  Therefore $i_F = i_\chi$ as required.

In case 1, $i_F$ and $i_\chi$ must agree on the symplectic member of the dual pair because their difference is a charater, and $\Sp_{2n}(F)$ admits no nontrivial characters.  The agreement on the orthogonal member follows as above, this time using the fact that the orthogonal member is contained in the standard parabolic of $\Sp(W)$ so that $j$ is constant on it.

\end{proof}

We now continue to review the global lift.  For each choice of nontrivial additive character $\psi$ of $\A / F$ we let $\omega_\psi$ be the global Weil representation of $\Mp(W, \A)$.  If $W = X \oplus Y$ is a complete polarisation of $W$ we may realise $\omega_\psi$ on the space $L^2( X(\A) )$, in which the space of smooth vectors is the Bruhat-Schwarz space $S( X(\A) )$.  On $S(X(\A))$ there is a distribution $\theta$ defined by

\begin{equation*}
\theta(\phi) = \sum_{\xi \in X(F)} \phi(\xi)
\end{equation*}
which satisfies

\begin{equation*}
\theta( \omega_\psi(\gamma) \phi ) = \theta( \phi), \qquad \gamma \in \Sp(W,F).
\end{equation*}
For each $\psi \in S(X(\A))$ we set

\begin{equation*}
\theta_\phi( g, h) = \theta( \omega_\psi( gh) \phi).
\end{equation*}
$\theta_\phi$ is then a smooth, slowly increasing function on $G'(F) \backslash \tG'(\A) \times G(F) \backslash \tG(\A)$; see \cite{H2, W} for these facts.  Note that there is no ambiguity in the definition of $G'(F) \times G(F) \subset \tG'(\A) \times \tG(\A)$ by lemma \ref{agreement}.\\

Let $\pi$ be a cusp form contained in $L^2( G'(F) \backslash \tG'(\A) )$ which transforms under $S^1$ according to the inverse of the standard character.  Because of the splitting $i_\chi$, this is equivalent to giving a cusp form on $G'(F) \backslash G'(\A)$.  For each $f \in \pi$, we may then define

\begin{equation*}
\theta_\phi(f, h) = \int_{G'(F) \backslash \tG'(\A)} \theta_\phi( g, i_\chi(h) ) f(g) dg, \quad h \in G(\A).
\end{equation*}
In general, $\theta_\phi(f, h)$ is a smooth function of moderate growth on $G(F) \backslash G(\A)$.  However, if we assume in addition that $n > 2n' + 4d -2$ then the following theorem was proven by Li in \cite{Li2}.

\begin{theorem}
\label{LiL2}
$\theta_\phi(f, h)$ is square integrable, and nonzero for some choice of $f \in \pi$ and $\phi \in S(X(\A))$.  If $\Theta(\pi, V)$ is the subspace of $L^2( G(F) \backslash G(\A) )$ generated by the functions $\theta_\phi(f, h)$ under the action of $G(\A)$, then $\Theta(\pi, V)$ is an irreducible automorphic representation which is isomorphic to $\otimes_v \theta( \pi_v, V_v)$.

\end{theorem}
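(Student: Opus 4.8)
This is the main theorem of Li \cite{Li2}, and the plan is to follow his argument, an instance of the doubling method of Piatetski-Shapiro and Rallis together with the Rallis inner product formula. Three assertions are at stake: square-integrability of $\theta_\phi(f,h)$; its non-vanishing for a suitable pair $(f,\phi)$; and the irreducibility and local structure of the representation it generates. One first notes that $\theta_\phi(f,h)$ really is a function on $G(F)\backslash G(\A)$: it is left $\Sp(W,F)$-invariant through the section $i_F$, and by Lemma \ref{agreement} the restrictions of $i_F$ and $i_\chi$ to $G(F)$ agree, so the lift descends to $G(F)\backslash G(\A)$.

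For square-integrability I would compute the Petersson pairing of two lifts. Using the see-saw attached to the pair $(G,G')$ and the Siegel--Weil formula, one unfolds
\[
\langle\theta_\phi(f_1,\cdot),\theta_\phi(f_2,\cdot)\rangle_{G}=\int_{(G'\times G')(F)\backslash(G'\times G')(\A)}f_1(g_1)\,\overline{f_2(g_2)}\;E\big(s_0;(g_1,g_2),\Phi\big)\,dg_1\,dg_2,
\]
where $E(s;\,\cdot\,,\Phi)$ is a Siegel--Eisenstein series on the doubling group of $V'$ attached to a Schwartz function $\Phi$ manufactured from $\phi$, $(g_1,g_2)$ being mapped into that group diagonally, and the identity results from carrying out the theta integral over $G(F)\backslash G(\A)$. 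The stable-range hypothesis $n>2n'+4d-2$ places $s_0$ in the range of absolute convergence of this Eisenstein series --- so that the Siegel--Weil formula applies with no regularisation --- whence the double integral converges and $\theta_\phi(f,\cdot)$ is square-integrable; inspection of $s_0$ shows the lift is in general residual rather than cuspidal, as $V$ lies past the first occurrence in its Witt tower. Alternatively one can estimate the theta kernel directly against the rapidly decreasing cusp form $f$ in a mixed model of $\omega_\psi$ adapted to a rational maximal isotropic subspace of $V$.

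Non-vanishing then follows from the basic identity of the doubling method, which factors the integral above, up to normalisation, as
\[
\langle\theta_\phi(f,\cdot),\theta_\phi(f,\cdot)\rangle_{G}=c\cdot L^{S}(s_0,\pi)\prod_{v\in S}Z_v\big(s_0,f_v,\overline{f_v},\Phi_v\big),\qquad c\neq 0,
\]
where $L^{S}(s_0,\pi)$ is the relevant (partial) doubling $L$-function of $\pi$, twisted by $\chi$ in case 2. At $s_0$ this $L$-function is given by an absolutely convergent Euler product, hence is non-zero, and each local factor $Z_v$ is non-zero for a suitable choice of $f_v$ and $\Phi_v$: in the local stable range the local theta lift $\theta(\pi_v,V_v)$ is non-zero, and $Z_v$ computes, up to the local $L$-factor, the invariant bilinear pairing on $\Theta(\pi_v,V_v)$ (Kudla, Rallis, Li \cite{Li4}). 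Choosing data so that no factor vanishes yields $(f,\phi)$ with $\theta_\phi(f,\cdot)\neq 0$.

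Finally, as an abstract $G(\A)$-module $\Theta(\pi,V)$ is generated by the image of the map $f\otimes\phi\mapsto\theta_\phi(f,\cdot)$, which factors through the $G'(\A)$-coinvariants of $\pi\otimes\omega_\psi$ (restricted to $G(\A)\times G'(\A)$ via $i_\chi$), that is, through the restricted tensor product $\bigotimes_v\Theta(\pi_v,V_v)$. Thus $\Theta(\pi,V)$ is a non-zero quotient of $\bigotimes_v\Theta(\pi_v,V_v)$, and, being a subrepresentation of $L^2(G(F)\backslash G(\A))$, is unitary and admissible, hence semisimple; so it is a quotient of the maximal semisimple quotient of $\bigotimes_v\Theta(\pi_v,V_v)$. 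In the stable range Howe's conjecture holds at every place (including those above $2$), so each $\Theta(\pi_v,V_v)$ has a unique irreducible quotient $\theta(\pi_v,V_v)$; the maximal semisimple quotient is therefore the irreducible representation $\bigotimes_v\theta(\pi_v,V_v)$, and $\Theta(\pi,V)\cong\bigotimes_v\theta(\pi_v,V_v)$. (The same conclusion also falls out of the inner product formula above, which exhibits the map $\bigotimes_v\theta(\pi_v,V_v)\to L^2(G(F)\backslash G(\A))$ as a non-zero scalar multiple of an isometry.) The principal obstacle is square-integrability together with the convergence and Euler factorisation of the doubling integral at $s_0$ --- equivalently, extracting the needed convergence range of the Siegel--Weil formula from the inequality $n>2n'+4d-2$; the local non-vanishing and Howe-duality inputs are all at hand in the stable range, and for the details I would refer to \cite{Li2}.
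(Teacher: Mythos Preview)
The paper does not prove this theorem; it is quoted from Li \cite{Li2} without argument, so there is no in-paper proof to compare against. Your sketch accurately follows Li's original approach (Rallis inner product via doubling and Siegel--Weil in the stable range, local non-vanishing, and Howe duality for irreducibility), which is exactly the reference the paper invokes.
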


We shall refer to the automorphic representation $\Theta(\pi,V)$ as the global theta lift of $\pi$.  We finish this section by discussing how $\Theta(\pi,V)$ depends on the choices of $\psi$ and $\chi$ we have made.  Let $\psi_0$ be the standard additive character of the adeles of $\Q$, and choose $\psi$ to be the pullback of $\psi_0$ to $\A$ via the trace.  Any other nontrivial character of $\A / F$ is of the form $\psi(ax)$ for some $a \in F^\times$, and we denote this character by $\psi_a$.  Denote the Weil representation $\omega_{\psi_a}$ by $\omega_a$.  It is known that $\omega_a$ and $\omega_b$ have no irreducible component in common if $ab$ is not a square in $F^\times$, and that the duality correspondence between $\tG'(\A)$ and $\tG(\A)$ depends on $a$.  However, it is well known that this dependence can be transferred from $\psi_a$ to the quadratic form $( \; , )'$, in the sense that the correspondence with respect to $\omega_a$ for the pair $(G', G)$ considered as the isometry groups of $( \; , )'$ and $( \; , )$ is the same as the correspondence with respect to $\omega_\psi$ for $(G', G)$ considered as the isometry groups of $a( \; , )'$ and $( \; , )$.  Because we shall be taking a sum over all quadratic forms $( \; , )'$ in this paper, we therefore only need to consider the character $\psi$ which is unramified away from the different of $F/\Q$.

We only need to consider one choice of section $i_\chi$ for a similar reason.  Define $\A_E^1$ by

\begin{equation*}
\A_E^1 = \{ z \in \A_E | z z^\iota = 1 \}.
\end{equation*}
If $\chi_1$ and $\chi_2$ are two characters of $\A_E^\times / E^\times$ which agree on $\A^\times$, we may define a character $\alpha_{\chi_1, \chi_2}$ of $\A_E^1 / E^1$ as follows.  If $z \in \A_E^1$, we may choose $u \in \A_E$ such that $z = u^\iota / u$, and define $\alpha_{\chi_1, \chi_2}(z)$ to be

\begin{equation*}
\alpha_{\chi_1, \chi_2}(z)  = \chi_1 \chi_2^{-1}(u).
\end{equation*}
It can be seen that this is independent of $u$, and factors through $E^1$.  The sections $i_{\chi_1}$ and $i_{\chi_2}$ then differ by twisting by $\alpha_{\chi_1, \chi_2}$, i.e.

\begin{equation*}
i_{\chi_2}(g) = \alpha_{\chi_1, \chi_2}( \det g) i_{\chi_1}(g), \quad g \in G'(\A) \times G(\A).
\end{equation*}
Note that we are identifying the $S^1$ subgroup of $\Mp$ with the norm one complex numbers.  This relation between $i_{\chi_1}$ and $i_{\chi_2}$ implies that the two representations $\Theta_{\chi_1}( \pi, V)$ and $\Theta_{\chi_2}( \pi, V)$ which they provide are twists of one another.  Because we shall be considering all twists of these representations in any case, we only need to consider the lift constructed with the use of one character $\chi$, which we may take to be unramified away from the different of $E/\Q$.

\section{Construction of Cohomological Automorphic Representations}
\label{thetacoh}

This section contains a review of the results of Li in \cite{Li2, Li5} on the existence of cohomological automorphic representations on arithmetic manifolds, as well as the the approach taken by the first author toawrds making these results quantitative in \cite{Co}.

\subsection{Some Results of Li}

In \cite{Li5}, Li determines those unitary automorphic representations of $G(\R)$ which occur in the Archimedean theta correspondence for $(G', G)$ as the lift of a discrete series representation.  We recall that the cohomological unitary representations of $G(\R)$ were classified by Vogan and Zuckerman in \cite{VZ}.  They are denoted by $A_\q(\lambda)$ where $\q$ is a $\theta$-stable parabolic subalgebra of the complexified Lie algebra $\g$ of $G(\R)$ and $\lambda$ is a suitable one-dimensional representation of $\q$.  We shall state Li's results in the case where $G(\R)$ is almost simple, and allow the reader to infer the corresponding statements in the reductive case.  Choose a $\theta$-stable Levi decomposition $\q = \mathfrak{l} + \mathfrak{u}$, and set $\mathfrak{l}_0 = \mathfrak{l} \cap \g_0$, where $\g_0$ is the Lie algebra of $G(\R)$.  Li then proves

\begin{theorem}

\begin{itemize}

\item Let $\pi$ be a ``sufficiently regular'' discrete series representation of $G'(\R)$ (see \cite{Li5} for terminology).  Then $\theta( \pi, V)$ is a nonzero representation of $G(\R)$ of the form $A_\q(\lambda)$ with $\q$ and $\lambda$ explicitly determined.

\item Given an $A_\q(\lambda)$ for $G(\R)$, assume that

\begin{equation*}
\mathfrak{l}_0 = \mathfrak{l}_0' \oplus \g_1,
\end{equation*}
where $\mathfrak{l}_0'$ is a compact lie algebra and $\g_1$ is of the `same type' as $\g$.  Then there exists a group $G'$ with $n' \le n$, and a discrete series representation $\pi$ of $G'(\R)$ which is ``sufficiently regular", such that $A_\q(\lambda)$ is the theta lift of $\pi$. In particular, every $A_\q(\lambda)$ is obtained this way when $G(\R)$ is $SO(n, 1)$ or $U(n, 1)$.

\end{itemize}

\end{theorem}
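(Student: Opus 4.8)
Since both assertions are theorems of Li \cite{Li5}, the plan is to reconstruct the argument from the structure of the archimedean theta correspondence rather than to cite it verbatim. The first bullet amounts to pinning down $\theta(\pi, V)$ by its invariants. I would begin by recording how the correspondence acts on infinitesimal characters: if $\pi$ is a discrete series of $G'(\R)$ with Harish--Chandra parameter $\Lambda$, then the explicit description of $\Theta(\pi,V)$ via the Weil representation (carried out in \cite{Li4}; compare \cite{H2}) shows that $\theta(\pi,V)$ has infinitesimal character determined from $\Lambda$ together with the dual pair $(G',G)$ by a fixed rule, amounting in the stable range to a translate by $\rho_G - \rho_{G'}$. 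The purpose of the ``sufficiently regular'' hypothesis on $\pi$ is precisely to force the resulting parameter to be dominant, regular and integral for $G(\R)$, so that it coincides with the infinitesimal character of a finite-dimensional representation of $G(\R)$.

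The next step is to show that $\theta(\pi,V)$ is unitary and nonzero. Nonvanishing follows from Li's non-vanishing theorems for theta lifts of sufficiently regular discrete series, which one extracts either from an explicit matrix-coefficient integral or from see-saw duality with a smaller pair; unitarity holds because, for $n' \le n$, the analysis of \cite{Li4, Li2} exhibits the lift as cohomologically induced from a unitary character. Granting these two facts, one concludes that $\theta(\pi,V)$ has the form $A_\q(\lambda)$, either by invoking Salamanca-Riba's theorem that an irreducible unitary $(\g,K)$-module with the infinitesimal character of a finite-dimensional representation must be an $A_\q(\lambda)$ in the sense of \cite{VZ}, or, following Li more directly, by computing that the $(\g,K)$-cohomology of $\theta(\pi,V)$ twisted by the matching finite-dimensional representation is nonzero and appealing to the Vogan--Zuckerman classification. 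It then remains to identify $\q$ and $\lambda$, which I would do by computing the lowest $K$-type of $\theta(\pi,V)$ through Howe's $K$-type correspondence and matching it against the Vogan--Zuckerman formula for the lowest $K$-type of $A_\q(\lambda)$; the Levi $\mathfrak{l}_0$ that emerges has the form $\mathfrak{l}_0' \oplus \g_1$ with $\mathfrak{l}_0'$ compact, of rank governed by $n'$, and $\g_1$ of the same type as $\g$.

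For the second bullet, I would run this construction in reverse. Given an $A_\q(\lambda)$ for which $\mathfrak{l}_0 = \mathfrak{l}_0' \oplus \g_1$ with $\g_1$ of the same type as $\g$, the summand $\g_1$ together with the compact factor $\mathfrak{l}_0'$ single out a candidate for the smaller member $G'$ of a type I dual pair $(G',G)$: the dimension $n'$ is read off from the size of $\mathfrak{l}_0'$, and $n' \le n$ because $\g_1 \subset \g$ is a subalgebra of the correct type. Shifting $\lambda$ back by $\rho_G - \rho_{G'}$ produces a candidate Harish--Chandra parameter, which one verifies is sufficiently regular; applying the first bullet to the associated discrete series $\pi$ of $G'(\R)$ then yields $\theta(\pi,V) \cong A_\q(\lambda)$. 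For $G(\R) = SO(n,1)$ or $U(n,1)$ the hypothesis on $\mathfrak{l}_0$ is automatic, since the semisimple part of any $\theta$-stable Levi subalgebra of $\g$ is either compact or again of the form $\so(m,1)$ (resp.\ $\mathfrak{u}(m,1)$); hence every $A_\q(\lambda)$ is obtained in this way.

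The main obstacle, and the technical core of \cite{Li5}, is the explicit determination of the pair $(\q,\lambda)$: this requires careful bookkeeping of the $\rho$-shifts attached to each of the relevant dual pairs in cases 1 and 2, together with a precise description of how the lowest $K$-type is transported by the correspondence, and it is exactly here that the structural condition on $\mathfrak{l}_0$ in the second bullet turns out to be the image of the constraint ``$\pi$ is a discrete series of a dual-pair partner of $G$''. A secondary difficulty is isolating the precise strength of ``sufficiently regular'' that simultaneously guarantees nonvanishing of the lift, integrality of the shifted infinitesimal character, and the absence of an accidental earlier occurrence in the relevant Witt tower.
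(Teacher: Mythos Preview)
The paper does not prove this theorem at all: it is stated in section~\ref{thetacoh} purely as a citation of Li's results in \cite{Li5}, with no argument supplied or even sketched. There is therefore no ``paper's own proof'' for your proposal to be compared against; anything you write here already goes beyond what the paper does.

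That said, your outline is a reasonable reconstruction of the strategy in \cite{Li5}. Li's actual argument proceeds, as you indicate, by tracking how the theta correspondence transports Harish--Chandra parameters and lowest $K$-types, and then matching these against the Vogan--Zuckerman data of an $A_\q(\lambda)$; the structural condition $\mathfrak{l}_0 = \mathfrak{l}_0' \oplus \g_1$ with $\g_1$ of the same type as $\g$ is exactly what emerges from this computation, and the converse direction is obtained by reversing it. One anachronism: invoking Salamanca-Riba's theorem is not how Li argued, since that result postdates \cite{Li5}; Li works directly with the $K$-type structure and the explicit realisation of the lift to identify it as an $A_\q(\lambda)$. Your alternative route via nonzero $(\g,K)$-cohomology plus \cite{VZ} is closer to the original. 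If you intend this as a genuine proof rather than a citation, the part that would need real work is precisely what you flag at the end: the case-by-case bookkeeping of $\rho$-shifts and the $K$-type correspondence, which occupies the bulk of \cite{Li5}.
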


We shall refer to those cohomological representations of $G(\R)$ which are obtained as the lift of a discrete series representation of $G'(\R)$ with $n > 2n' +4d -2$ as being of discrete type.  As a consequence of his proof of Theorem \ref{LiL2}, Li is able to deduce in \cite{Li2} that if $\pi$ is a cohomological representation of $G(\R)$ which is of discrete type, then it occurs as the Archimedean component of a representation in the discrete subspace of $L^2( G(F) \backslash G(\A) )$.  See section 2.2 of \cite{Co} for a description of those representations which are of discrete type, including the degrees in which they contribute to cohomology.

\subsection{Notation and Definitions}

We shall now introduce our notation and give a precise definition of $L^2_\theta(X(\n))$.  Let $\OO_v$ and $\OO_w$ denote the rings of integers of $F_v$ and $E_w$ for $v$ and $w$ finite, and define

\begin{equation*}
\widehat{\OO}_E = \prod_{ w \nmid \, \infty } \OO_w.
\end{equation*}
Let $L$ be an $\OO_E$ lattice in $V$, and let

\begin{equation*}
\widehat{L} = \prod_{ w \nmid \, \infty } L_w = L \otimes \widehat{\OO}_E
\end{equation*}
be its adelic closure.  If $\n \subset \OO_F$ is an ideal, define the principal congruence subgroup $K(\n) \subset G(\A)$ by

\begin{equation*}
K(\n) = \{ g \in G(\A_f) | (g-1) \widehat{L} \subset \n \widehat{L} \}.
\end{equation*}
We shall write $K(\n)$ as a product of the form

\begin{equation*}
K(\n) = \prod_{ v \nmid \, \infty } K_v( \n).
\end{equation*}
Similarly, if $\varpi_v$ denotes a uniformiser of $\OO_v$, then for $v$ a non-Archimedean place of $F$ and any choice of $w | v$ define

\begin{equation*}
K_v( \varpi_v^k ) = \{ g \in G(F_v) | (g-1)L_w \subset \varpi_v^k L_w \}.
\end{equation*}
We shall use $K$ and $K_v$ to denote the compact subgroup corresponding to the choice $\n = \OO_F$.  Recall that we have defined $X(\n)$ and $X^0(\n)$ to be the quotients

\begin{eqnarray*}
X(\n) & = & G(F) \backslash G(\A) / K(\n), \\
X^0(\n) & = & G^0(F) \backslash G^0(\A) / K^0(\n), \quad K^0(\n) = K(\n) \cap G^0(\A).
\end{eqnarray*}
We define $\alpha(V)$ to be

\begin{equation*}
\begin{array}{ll} n(n - \eta)/2, & \text{case } 1, \\
n^2/2, & \text{case } 2,
\end{array}
\end{equation*}
so that we have

\begin{equation*}
\text{Vol}( X(\Cc \p^k) ) \sim N\Pp^{\alpha(V) k}
\end{equation*}
(and likewise for $\alpha(V')$).\\

We define $L^2_\theta( G(F) \backslash G(\A) ) \subset L^2( G(F) \backslash G(\A) )$ to be the $G(\A)$-invariant subspace

\begin{equation*}
\bigoplus_{\alpha, V'} \bigoplus_\pi \alpha( \det g) \otimes \Theta( \pi, V).
\end{equation*}
The first sum is indexed by the $-\eta$ -Hermitian spaces $V'$ over $E$ in the stable range, and the set of characters $\alpha$ of $\mu(F) \mu(\R) \backslash \mu(\A)$ where $\mu$ is the algebraic group

\begin{equation*}
\begin{array}{ll}
\text{id}, & \text{case } 1, \eta = -1, \\
\{ \pm 1 \}, & \text{case } 1, \eta = 1, \\
U(1), & \text{case } 2.
\end{array}
\end{equation*}
The second sum is taken over the set of cuspidal automorphic representations of $G'(\A)$.  We denote the $K(\n)$-fixed subspace of $L^2_\theta( G(F) \backslash G(\A) )$ by $L^2_\theta(X(\n))$, and the set of restrictions of functions in this space to $X^0(\n)$ by $L^2_\theta( X^0(\n) )$.\\

Suppose that $A_\q(\lambda)$ is a cohomological automorphic representation of $G(\R)$ of discrete type, so that there is a quadratic space $V'_\q$ over $E \otimes_\Q \R$ and a discrete series representation $\pi_\q$ of the isometry group $U(V'_\q)$ of $V'_\q$ such that $A_\q(\lambda)$ is realised as the lift of $\pi_\q$ from $U(V'_\q)$.  We denote the multiplicity with which $A_\q(\lambda)$ occurs in $L^2_\theta(X(\n))$ (resp. $L^2_\theta(X^0(\n))$) by $m_\theta( A_\q(\lambda), \n )$ (resp. $m_\theta^0( A_\q(\lambda), \n )$).  Define $\mu(\n)$ to be the finite group

\begin{equation*}
\mu(F) \mu(\R) \backslash \mu(\A) / \det K(\n).
\end{equation*}
Because we are free to twist elements of $L^2_\theta(X(\n))$ by characters of $\mu(\n)$, and $X^0(\n)$ is equal to the kernel of all of these characters, we have

\begin{eqnarray*}
m_\theta( A_\q(\lambda), \n ) = | \mu(\n)| m_\theta^0( A_\q(\lambda), \n ),
\end{eqnarray*}
and $| \mu(\Cc \p^k)| \sim N\p^{\delta k}$.

In case 1 with $\eta = -1$ and case 2, $G^0$ satisfies strong approximation.  Therefore the locally symmetric space $Y^0(\n)$ is connected, and if $\Gamma(\n) \subset G^0(\R)$ is the set of elements which preserve $L$ and act trivially on $L / \n L$ then we have $Y^0(\n) = \Gamma(\n) \backslash G^0(\R) / K_\infty$.  In case 1 with $\eta = 1$ $G^0$ does not satisfy strong approximation, and so $Y^0(\n)$ will not be connected in general.  However, the number of connected components of $Y^0(\Cc \p^k)$  will be bounded as $k \rightarrow \infty$, and moreover any two of these components will have a common finite cover whose index above both may be bounded independently of $k$.\\

\subsection{Reduction to Local Representation Theory}
\label{repredn}

We shall assume for the rest of the paper that $\n = \Cc \p^k$ for $\Cc$ and $\p$ as in Theorem \ref{mainthm}.  The multiplicity $m_\theta( A_\q(\lambda), \Cc \p^k )$ with which $A_\q(\lambda)$ occurs in $L^2_\theta(X(\Cc \p^k))$ is equal to

\begin{eqnarray}
\notag
m_\theta( A_\q(\lambda), \Cc \p^k ) & = & \bigoplus_{ \substack{ \alpha, \\ V' \in \VV' } } \bigoplus_{ \substack{ \pi = \pi_\infty \otimes \pi_f \\ \pi_\infty \simeq \pi_\q } } \dim ( \alpha( \det g) \otimes \Theta( \pi, V)_f )^{K(\Cc \p^k)} \\
\label{mtheta}
& = & \bigoplus_{ \substack{ \alpha, \\ V' \in \VV' } } \bigoplus_{ \substack{ \pi = \pi_\infty \otimes \pi_f \\ \pi_\infty \simeq \pi_\q } } \prod_{ v \nmid \; \infty } \dim ( \alpha_v( \det g) \otimes \Theta( \pi, V)_v )^{K_v(\Cc \p^k)}.
\end{eqnarray}
Here, $\VV'$ denotes the set of Hermitian spaces $V'$ such that $V' \otimes_\Q \R \simeq V'_\q$, and $\Theta( \pi, V)_f$ denotes the finite components of the representation $\Theta( \pi, V)$.  We shall begin to prove bounds for $m_\theta$ by first controlling the Hermitian spaces $V'$ and the level of the automorphic representations $\pi$ which may contribute to the sum.  For each $V' \in \VV'$, choose an $\OO_E$ lattice $L'$ which is self-dual in $V_v'$ whenever possible, and define compact subgroups $K'(\n)$ and $K'_v(\varpi_v^k)$ of $G'(\A)$ with respect to $L'$ in the same way as with $G(\A)$.  In lemma 2.9 of \cite{Co}, the first author proves

\begin{prop}
\label{ramcontrol1}
For all places $v \notin S$:

\begin{itemize}

\item If $(\alpha_v \otimes \Theta( \pi, V)_v )^{K_v(\varpi_v^k)} \neq 0$ then $\alpha_v |_{\det K_v(\varpi_v^k)} = 1$.

\item $\Theta( \pi, V)_v^{K_v} \neq 0$ if and only if $V'_v$ is unramified and $\pi_v^{K'_v} \neq 0$.

\end{itemize}

\end{prop}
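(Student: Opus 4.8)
The decisive structural point is that $v \notin S$ places us in the \emph{unramified situation}: $v$ does not divide $2$, is unramified in $E$, and $V_v$ contains a self-dual lattice $L_v$, so $K_v$ is hyperspecial in $G(F_v)$. If in addition $V'_v$ is unramified we fix a self-dual lattice $L'_v \subset V'_v$ and obtain a hyperspecial $K'_v \subset G'(F_v)$; and because $\psi$ and $\chi$ were chosen unramified away from the different, a lattice model of the local Weil representation $\omega_\chi$ at $v$ carries a distinguished nonzero $K_v \times K'_v$-fixed vector $\phi_v^0$, the characteristic function of a self-dual lattice in $W_v$. The plan is to run the second bullet through $\phi_v^0$ and then deduce the first bullet by a determinant and central-character computation.

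For the second bullet, suppose first that $V'_v$ is unramified and pick $0 \ne \xi \in \pi_v^{K'_v}$. The image $\bar\xi$ of $\xi \otimes \phi_v^0$ in the irreducible quotient $\theta(\pi_v, V_v) = \Theta(\pi,V)_v$ of $(\pi_v \otimes \omega_\chi)_{G'(F_v)}$ is $K_v$-fixed since $\phi_v^0$ is; what must be shown is that $\bar\xi \ne 0$, and this is precisely the computation of the unramified theta correspondence of Howe and Rallis, which shows that the spherical vector of $\pi_v$ maps to the spherical vector of $\theta(\pi_v, V_v)$ and that the latter is nonzero because $(G'(F_v), G(F_v))$ lies in the stable range $n > 2n' + 4d - 2$. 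Conversely, any nonzero $K_v$-invariant functional $\ell$ on $\Theta(\pi,V)_v$, pulled back along the surjection $(\pi_v \otimes \omega_\chi)_{G'(F_v)} \to \theta(\pi_v, V_v)$ and paired with $\phi_v^0$, yields the $K'_v$-invariant functional $\xi \mapsto \ell(\overline{\xi \otimes \phi_v^0})$ on $\pi_v$, nonzero by the same computation, so $\pi_v^{K'_v} \ne 0$. Finally, if $V'_v$ is ramified one shows $\Theta(\pi,V)_v^{K_v} = 0$ from the fact that a spherical representation of the unramified group $G(F_v)$ occurs in the theta correspondence only with the unramified member of the relevant Witt tower on the $G'$ side. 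These three facts together give the stated equivalence.

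For the first bullet, make the homomorphism $\det\colon K_v(\varpi_v^k) \to \mu(F_v)$ explicit at $v \notin S$; since $\Theta(\pi,V)_v = \theta(\pi_v, V_v)$ is irreducible by Theorem \ref{LiL2}, evaluating the action of its scalar elements on a $K_v(\varpi_v^k)$-fixed vector of $\alpha_v \circ \det \otimes \Theta(\pi,V)_v$ pins down the central character of $\theta(\pi_v, V_v)$ on the center of $K_v(\varpi_v^k)$ in terms of $\alpha_v$; comparing this with the known relation between that central character, the central character of $\pi_v$, and the splitting character $\chi_v$ (which is unramified at $v$), and, in the level-zero case, twisting $\theta(\pi_v, V_v)$ by $\alpha_v^{-1} \circ \det$ and feeding the result back through the correspondence as in the second bullet, forces $\alpha_v$ to be trivial on $\det K_v(\varpi_v^k)$. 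The main obstacle is none of this bookkeeping but the two nonvanishing assertions of the second bullet: exhibiting the candidate spherical vector $\bar\xi$ and the candidate spherical functional is formal, whereas their nonvanishing is the genuine content of the unramified theta correspondence and rests on the explicit computation of Howe and Rallis together with the stable-range hypothesis, rather than on any soft argument.
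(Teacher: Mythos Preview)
The paper does not supply its own proof of this proposition: immediately before the statement it says ``In lemma~2.9 of \cite{Co}, the first author proves'' and then records the result for later use. So there is nothing in the present paper to compare your argument against, and the question is simply whether your sketch stands on its own.

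For the second bullet your approach is the standard one and is fine. At $v\notin S$ the lattice $J=L_v\otimes L'_v$ is self-dual (when $V'_v$ is unramified), the characteristic function $\phi_v^0$ of $J$ is the distinguished $K_v\times K'_v$-fixed vector in the lattice model, and the unramified Howe correspondence (Howe, Rallis, and the discussion in \cite{MVW}) gives both the nonvanishing of $\overline{\xi\otimes\phi_v^0}$ and the converse direction. The ramified case is handled, as you say, by the Witt-tower dichotomy. This is essentially the argument in \cite{Co}.

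For the first bullet, however, your sketch has a genuine gap. Your central-character step only sees the action of scalars: if $z\in Z(G)\cap K_v(\varpi_v^k)$ then $\det z=z^n$, so comparing central characters constrains $\alpha_v$ only on the subgroup of $n$-th powers inside $\det K_v(\varpi_v^k)$, not on all of $\det K_v(\varpi_v^k)$. In case~2 with $v$ inert, $\det K_v(\varpi_v^k)$ is the pro-$p$ group $\{u\in U(1)(\OO_v):u\equiv 1\ (\varpi_v^k)\}$, and the $n$-th power map on it is surjective only when $p\nmid n$; when $p\mid n$ your argument says nothing about $\alpha_v$ on the cokernel. The fallback you propose---twist $\theta(\pi_v,V_v)$ by $\alpha_v^{-1}\circ\det$ and ``feed the result back through the correspondence''---is also not justified: there is no a priori reason the twist is again a local theta lift from $G'(F_v)$, so the second bullet does not apply to it. A correct argument must use more than the central character; one route is to exploit the explicit $K_v$-action in the lattice model (equation~(\ref{hact}) in the paper) together with the surjection $\pi_v^{K'_v(\varpi_v^k)}\otimes S_0(W)\twoheadrightarrow\theta(\pi_v,V_v)^{K_v(\varpi_v^k)}$ from \cite{MVW}, which shows that every $K_v(\varpi_v^k)$-isotype of $\theta(\pi_v,V_v)$ arising this way is already the trivial one, and then argue that no other isotype of the form $\alpha_v\circ\det$ can occur.
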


Proposition \ref{ramcontrol1} implies that the only spaces $V'$ which can contribute to the sum (\ref{mtheta}) must be unramified outside $S$ and the places dividing $\Cc$, and it is known that the number of these is finite.  Indeed, when $V'$ is orthogonal this follows from the simple observation that the existence of a self dual lattice at a place $v$ implies that $V'_v$ has trivial Hasse invariant and that its discriminant has even valuation.  Therefore there are only finitely many square classes which can be assumed by the discriminant of $V'$, and finitely many possibilities for its local Hasse invariants, and these uniquely determine $V'$ by the classification of quadratic forms over a number field.  The case where $V'$ is Hermitian reduces to the orthogonal case, as Hermitian forms are uniquely determined by their trace forms, and the symplectic case is trivial.

As a result, when proving the upper bound (\ref{mainupper}) we may assume that $\VV'$ consists of a single space $V'$.  We may control the level of $\pi$ at the places dividing $\Cc$ and those in $S$ using the following result, stated as Proposition 2.11 of \cite{Co}.

\begin{prop}
\label{ramcontrol2}
Let $v$ be any non-Archimedean place of $F$, and assume that

\begin{equation*}
( \alpha_v \otimes \theta_v( \pi_v, V_v) )^{K_v(\varpi_v^t)} \neq 0
\end{equation*}
for some $t$.  Then there exists $t'$ depending only on $t$ such that

\begin{equation*}
\alpha_v |_{\det K'_v( \varpi_v^{t'}) } = 1, \qquad \pi_v^{K'_v( \varpi_v^{t'})} \neq 0.
\end{equation*}
Moreover, there exists $C( L, L', v, t)$ such that

\begin{equation*}
\dim ( \alpha_v \otimes \theta_v( \pi_v, V_v) )^{K_v(\varpi_v^t)} \le C( L, L', v, t) \dim \pi_v^{K'_v( \varpi_v^{t'})}.
\end{equation*}

\end{prop}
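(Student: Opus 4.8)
The plan is to reduce the statement to a purely local question about the behaviour of the local theta correspondence under the filtration of $G(F_v)$ by the congruence subgroups $K_v(\varpi_v^t)$, and then to exploit two standard finiteness principles: admissibility of the Weil representation and the Howe duality quotient $\theta_v(\pi_v,V_v)$. First I would set up the local picture at the fixed place $v$: realise $\omega_\chi = \omega_\psi|_{G(F_v)\times G'(F_v)}$ on a Bruhat--Schwartz space $S(X_v)$ coming from a complete polarisation $W_v = X_v\oplus Y_v$, so that $\Theta(\pi_v,V_v) = (\pi_v\otimes\omega_\chi)_{G'(F_v)}$ and $\theta_v(\pi_v,V_v)$ is its unique irreducible quotient (which exists in all cases we need by the theorem quoted after Kudla's splitting). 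The key observation is that a vector in $\theta_v(\pi_v,V_v)$ fixed by $\alpha_v\otimes K_v(\varpi_v^t)$ lifts, modulo the kernel of $\Theta\twoheadrightarrow\theta$, to a $G'(F_v)$-coinvariant in $\pi_v\otimes\omega_\chi$; what one must then show is that such a coinvariant can be represented using Schwartz functions supported on a lattice of bounded depth, and transforming on the $G'(F_v)$ side under a subgroup $K'_v(\varpi_v^{t'})$ with $t'$ depending only on $t$ (and on $L$, $L'$, $v$).

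The main steps would be: (1) Show that the action of $K_v(\varpi_v^t)$ on $S(X_v)$ preserves, up to a bounded shift, the subspace $S(X_v)^{(m)}$ of functions supported on $\varpi_v^{-m}\Lambda_X$ and constant modulo $\varpi_v^m\Lambda_X$ for a lattice $\Lambda_X\subset X_v$ adapted to $L$ and $L'$; this uses the explicit Weil-representation formulas for the Siegel parabolic and the Weyl element, together with the fact that $K_v(\varpi_v^t)$ sits inside a small congruence subgroup of $\Sp(W_v)$. Consequently any $(\alpha_v\otimes K_v(\varpi_v^t))$-fixed functional on $\Theta(\pi_v,V_v)$ factors through a finite-dimensional piece. (2) Transfer the invariance: a vector fixed by $K_v(\varpi_v^t)$ on the $G$ side, living in $S(X_v)^{(m)}$, is automatically invariant under a congruence subgroup $K'_v(\varpi_v^{t'})$ on the $G'$ side, because $G'(F_v)$ also acts through the metaplectic cover and its action on $S(X_v)^{(m)}$ factors through a finite quotient; a compactness/continuity argument pins down $t'$ in terms of $m$, hence of $t$, and one reads off $\alpha_v|_{\det K'_v(\varpi_v^{t'})}=1$ from the first bullet of Proposition \ref{ramcontrol1} applied to $G'$ (or directly, since the character through which $\alpha_v$ acts is trivial on a deep enough congruence subgroup). (3) Obtain the dimension bound: the map sending $f'\in(\pi_v)^{K'_v(\varpi_v^{t'})}$ to the corresponding theta-fixed vectors in $\Theta(\pi_v,V_v)^{K_v(\varpi_v^t)}$, built by integrating $\theta_\phi$ against $f'$ over $G'(F_v)$ with $\phi$ ranging over a basis of the finite-dimensional space $S(X_v)^{(m)}$, is surjective; its fibres have dimension at most $\dim S(X_v)^{(m)} =: C(L,L',v,t)$, which gives exactly the asserted inequality after composing with $\Theta\twoheadrightarrow\theta$.

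The hard part will be step (1): making precise the claim that conjugation/translation by $K_v(\varpi_v^t)$ moves a function in $S(X_v)^{(m)}$ into $S(X_v)^{(m')}$ with $m'=m'(m,t)$ uniform, since the Weil-representation operators attached to the long Weyl element involve a Fourier transform and a Gauss-sum factor that a priori could spread supports; one has to check, using that $V_v$ and $V'_v$ contain self-dual or nearly-self-dual lattices away from $S$, that the relevant symplectic lattice $\Lambda_W\subset W_v$ is self-dual with respect to $\psi$, so that the Fourier transform is essentially an isometry of the standard lattice filtration and the spreading is controlled by the conductor of $\psi_v$ and the defect of $L$, $L'$ from self-duality at $v$. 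The remaining steps are formal once this uniformity is in hand; note that unlike Proposition \ref{ramcontrol1} we make no assumption $v\notin S$ here, so all constants are allowed to depend on $L$, $L'$, $v$ and $t$, which is what makes the argument go through at the bad places and at the places dividing $\Cc$.
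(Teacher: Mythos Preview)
The paper does not prove this proposition: it is quoted verbatim as Proposition~2.11 of \cite{Co}, with only the parenthetical remark that the statement about $\alpha_v$ (absent from \cite{Co}) ``follows easily in the same way as the first point of Proposition~\ref{ramcontrol1}.'' So there is no in-paper proof to compare against.

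That said, your sketch is the right shape, and in fact the paper carries out exactly your argument at the good prime $\p$ in Section~\ref{proof} (the upper-bound subsection). There the lattice model $S(W)$ is used, together with the filtration $S(W)[r]$ by support in $\p^{-r}J$, and the crucial surjection
\[
\pi^{K'(\p^k)} \otimes S_0(W) \twoheadrightarrow \theta(\pi,V)^{K(\p^k)}
\]
is invoked directly from chapter~5 of \cite{MVW} rather than re-proved. This is precisely the content of your ``hard part'' (step~(1)) at unramified places; the general-$v$ version in \cite{Co} is the same argument with the self-duality of $J$ relaxed, so the constants pick up a dependence on the defect of $L,L'$ from self-duality at $v$, exactly as you anticipate in your final paragraph.

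One point of imprecision in your step~(3): the map you describe is not naturally phrased in terms of fibres of dimension $\le \dim S(X_v)^{(m)}$. The clean statement is simply that the $K'$-equivariant surjection above gives
\[
\dim \theta_v(\pi_v,V_v)^{K_v(\varpi_v^t)} \le \dim S_0(W) \cdot \dim \pi_v^{K'_v(\varpi_v^{t'})},
\]
so $C(L,L',v,t) = \dim S_0(W)$. Your formulation via ``integrating $\theta_\phi$ against $f'$ over $G'(F_v)$'' mixes the global theta-kernel picture into a purely local statement; it is cleaner to stay with the coinvariant definition $\Theta(\pi_v,V_v)=(\pi_v\otimes\omega_\chi)_{G'(F_v)}$ and the MVW surjection.
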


(Note that the result stated in \cite{Co} does not include the statement about the ramification of $\alpha_v$, but this follows easily in the same way as the first point of Proposition \ref{ramcontrol1}.)  Propositions \ref{ramcontrol1} and \ref{ramcontrol2} together imply that the conductor of the character $\alpha$ must be bounded away from $\p$, and so the number of characters which may contribute to the sum is on the order of $N\p^{\delta k}$.  Furthermore, there exists an ideal $\Cc'$ of $\OO_F$ which is divisible only by primes in $S$ and those dividing $\Cc$ such that

\begin{eqnarray*}
m_\theta( A_\q(\lambda), \Cc \p^k ) & \ll & \bigoplus_{ \alpha } \bigoplus_{ \substack{ \pi = \pi_\infty \otimes \pi_f \\ \pi_\infty \simeq \pi_\q } } ( \alpha_\p \otimes \theta( \pi_\p, V_\p) )^{K_\p( \p^k )} \prod_{ v \nmid \; \infty \p } \dim ( \pi_v )^{K'_v(\Cc')} \\
& \ll & N\p^{\delta k} \bigoplus_{ \substack{ \pi = \pi_\infty \otimes \pi_f \\ \pi_\infty \simeq \pi_\q } } \theta_\p( \pi_\p, V_\p)^{K_\p( \p^k )} \prod_{ v \nmid \; \infty \p } \dim ( \pi_v )^{K'_v(\Cc')}.
\end{eqnarray*}

Factorise $K'(\Cc')$ as $K'_\p(\Cc') K'^\p(\Cc')$, and let $L^2_{\text{cusp}} ( X'( \Cc' \p^\infty ) )$ be the space of $K'^\p(\Cc')$-fixed vectors in $L^2_{\text{cusp}} ( G'(F) \backslash G'(\A) )$.  This space decomposes discretely with finite multiplicities under the action of $G'(\R) \times G'(F_\p)$, and we write the $\pi_\q$-isotypic component of this space as

\begin{equation*}
\pi_\q \otimes \bigoplus_{i=1}^\infty \pi_{\p, i} = \pi_\q \otimes \Pi_\p.
\end{equation*}
We may now reformulate the problem of bounding $m_\theta$ in local terms.  It follows from the trace formula (see Savin \cite{Sv}) that

\begin{equation}
\label{locasymp1}
\dim \Pi_\p^{K_\p( \p^k)} \sim N\Pp^{k \alpha(V') }.
\end{equation}
In fact, we shall use a slight refinement of this upper bound, proven in section \ref{proof}.  By studying the local theta correspondence, we shall prove that (\ref{locasymp1}) implies

\begin{equation}
\label{locasymp2}
\sum_{i=1}^\infty \dim ( \theta( \pi_{\p, i}, V_\p) )^{K_\p( \p^k)} \sim N \Pp^{nn'k/2},
\end{equation}
from which the required upper bound on $m_\theta^0$ follows immediately.\\

We will approach the lower bound in the same way, after first choosing a suitable Hermitian space $V'$.  In case 1 with $\eta = 1$ we may take the standard symplectic form, while in the other cases we choose the diagonal form

\begin{equation*}
( \; , )' = \sum_{i=1}^{n'} a_i x_i x_i^\iota
\end{equation*}
where $a_i$ are $S'$-units with the required signs at Archimedean places.  It can be seen that $V'$ is then unramified away from $S'$.  We let $L' \subset V'$ be a lattice which is self dual away from $S'$, which we use to define families of compact subgroups of $G'(\A)$.  We may also reduce proving a lower bound for $m_\theta$ to a local problem using the following counterpart of Proposition \ref{ramcontrol2}, stated in \cite{Co} as Proposition 2.13.

\begin{prop}
\label{ramcontrol3}
For every non-Archimedean place $v$ and compact open subgroup $K_v'$, there exists a $t$ such that

\begin{equation*}
\dim \theta( \pi_v, V_v)^{K_v(\varpi_v^t)} \ge \dim \pi_v^{K'_v}.
\end{equation*}

\end{prop}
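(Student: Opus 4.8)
The plan is to reduce the statement to a purely local fact about the theta correspondence at a single place $v$, and then produce the required level $t$ by an explicit but soft argument using Schwartz functions supported near the identity. Since the statement only asserts the \emph{existence} of some $t$, with no uniformity in $\pi_v$, we have considerable freedom. First I would fix a nonzero $K_v'$-fixed vector $\xi \in \pi_v^{K_v'}$ and, more generally, pick a basis $\xi_1, \dots, \xi_m$ of the finite-dimensional space $\pi_v^{K_v'}$; the goal is to produce $m$ linearly independent vectors in $\theta(\pi_v, V_v)^{K_v(\varpi_v^t)}$ for suitable $t$. The natural map to exploit is the canonical $G(F_v) \times G'(F_v)$-equivariant projection $\omega_\chi \otimes \pi_v^\vee \twoheadrightarrow \Theta(\pi_v^\vee, V_v) \twoheadrightarrow \theta(\pi_v^\vee, V_v)$ (taking contragredients to land in the correct variance, or equivalently working with the pairing $\omega_\chi \otimes \pi_v \otimes \theta(\pi_v,V_v)^\vee \to \C$ that realizes the correspondence); composing with evaluation against the $\xi_i$ gives, for each Schwartz function $\phi \in \Ss(X(F_v))$ in the realization of $\omega_\chi$, a vector $\theta_\phi^{\xi_i} \in \theta(\pi_v, V_v)$.

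The key computation is then to choose $\phi$ so that these vectors are $K_v(\varpi_v^t)$-fixed for some $t$ and remain linearly independent. For the fixedness: because $\pi_v$ is admissible, the vectors $\xi_i$ are fixed by $K_v'$, and the action of $G'(F_v)$ on $\omega_\chi$ is smooth, so averaging $\phi$ over $K_v'$ costs nothing; for the $G(F_v)$-side, since the Weil representation is smooth, each $\phi$ is fixed by $K_v(\varpi_v^{t})$ for $t$ large enough, and this $t$ depends only on $\phi$ (hence, once we have fixed finitely many $\phi$'s, only on $\pi_v$ through the choice of the $\xi_i$ and the lattice data). For linear independence: here I would invoke the nonvanishing of the local theta lift — $\theta(\pi_v, V_v)$ is nonzero in the stable range by the theorem quoted before Section \ref{globthetareview} — together with the fact that the map $\phi \mapsto \theta_\phi^{\xi}$ has image all of (a dense subspace of, hence by admissibility all of) $\theta(\pi_v, V_v)$ as $\phi$ and $\xi$ vary. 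Concretely, one shows that the bilinear pairing $(\phi, \ell) \mapsto \ell(\theta_\phi^{\xi_i})$ on $\Ss(X(F_v)) \times \theta(\pi_v, V_v)^\vee$ is nondegenerate in the second variable for at least one $i$; choosing Schwartz functions $\phi$ supported in small enough neighborhoods of suitably chosen points in $X(F_v)$ then separates the $\xi_i$ via their distinct behavior under the oscillator action, yielding $m$ independent vectors at a common finite level $\varpi_v^t$.

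The main obstacle I anticipate is making the linear-independence step clean rather than combinatorial: one must rule out unexpected collapses $\sum c_i \theta_\phi^{\xi_i} = 0$ holding for \emph{all} admissible $\phi$ at a given level. The honest way to handle this is to note that such a relation would force $\sum c_i \xi_i$ to lie in the kernel of the full theta map $\pi_v \to \Hom_{G'(F_v)}(\omega_\chi, \theta(\pi_v,V_v))$, and this kernel is zero precisely because $\theta(\pi_v, V_v) = \Theta(\pi_v, V_v)$ is the \emph{maximal} $\pi_v$-isotypic quotient and the correspondence is a bijection on the relevant isotypic pieces — this is exactly what Howe's conjecture (known in all our cases) provides. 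Given that, one argues: if $\{\xi_i\}$ is a basis of $\pi_v^{K'_v}$, the corresponding functionals on $\Ss(X(F_v))^{K'_v}$ (via $\phi \mapsto \theta_\phi^{\xi_i}$, paired against a fixed nonzero element of $\theta(\pi_v,V_v)^\vee$, or better, the whole family) are linearly independent, and since $\Ss(X(F_v)) = \varinjlim_t \Ss(X(F_v))^{K_v(\varpi_v^t)}$, some finite $t$ already witnesses this independence. This $t$ is the one claimed. A final remark: one should check that twisting by $\alpha_v$, which appears in Propositions \ref{ramcontrol1} and \ref{ramcontrol2}, is harmless here since $\alpha_v(\det \cdot)$ is trivial on $K_v(\varpi_v^t)$ for $t$ large, so it does not affect the conclusion — but as Proposition \ref{ramcontrol3} is stated without $\alpha_v$, this is not strictly needed.
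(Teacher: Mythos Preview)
Your reading of the statement is too weak: the level $t$ must be \emph{uniform} in $\pi_v$, depending only on the place $v$ and the compact open subgroup $K'_v$. This is essential for the application immediately following the proposition, where one passes from a sum over \emph{all} cuspidal $\pi$ with $\pi_\infty \simeq \pi_\q$ to a single auxiliary ideal $\Cc'$ independent of $\pi$. With $t$ allowed to depend on $\pi_v$ as you assume, your soft argument would indeed go through (and in fact the even softer observation that $\theta(\pi_v,V_v)$ is an infinite-dimensional admissible representation, so $\dim \theta(\pi_v,V_v)^{K_v(\varpi_v^t)} \to \infty$, already suffices); but such a $t$ is useless for the purpose at hand. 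Your linear-independence step, as written, chooses the witnessing Schwartz functions only after the basis $\xi_1,\dots,\xi_m$ of $\pi_v^{K'_v}$ has been fixed, and this is precisely where the unwanted dependence on $\pi_v$ enters.

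The paper does not reprove the proposition --- it is cited from \cite{Co} --- but the technique that yields the required uniformity is exactly the local Rallis inner product formula exploited in Section~\ref{proof} for the sharp lower bound. One fixes, once and for all, a Schwartz vector $\phi_0 \in \omega_\chi$ depending only on $K'_v$ (e.g.\ the characteristic function of a suitable coset in the Schr\"odinger model, as in the construction of the functions $\phi_t$ there) whose matrix coefficient $g \mapsto \langle \omega(g)\phi_0, \phi_0 \rangle$ is supported in $K'_v$. Then for $v, v' \in \pi_v^{K'_v}$ the Rallis formula (\ref{rallis}) collapses to
\begin{equation*}
\langle \varphi(v \otimes \phi_0), \varphi(v' \otimes \phi_0) \rangle \; = \; c \int_{K'_v} \langle \omega(g)\phi_0, \phi_0 \rangle \, \langle v, v' \rangle \, dg \; = \; c' \langle v, v' \rangle
\end{equation*}
with $c' \neq 0$, so that $v \mapsto \varphi(v \otimes \phi_0)$ is a scalar multiple of an isometry $\pi_v^{K'_v} \hookrightarrow \theta(\pi_v,V_v)$. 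Its image lies in $\theta(\pi_v,V_v)^{K_v(\varpi_v^t)}$ for any $t$ with $K_v(\varpi_v^t)$ fixing $\phi_0$, and this $t$ visibly depends only on $\phi_0$, hence only on $K'_v$. Howe's conjecture alone, without this inner-product input, does not give you control of the level on the $G$-side that is uniform in $\pi_v$.
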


As in proving the upper bound, Propositions \ref{ramcontrol1} and \ref{ramcontrol3} imply that there exists an ideal $\Cc'$ divisible only by the primes in $S'$ such that

\begin{equation*}
m_\theta( A_\q(\lambda), \Cc' \p^k ) \gg N\p^{\delta k} \bigoplus_{ \substack{ \pi = \pi_\infty \otimes \pi_f \\ \pi_\infty \simeq \pi_\q } } \theta( \pi_\p, V_\p)^{K_\p( \p^k )} \prod_{ v \nmid \, \infty \p } \dim ( \pi_v )^{K'_v},
\end{equation*}
and hence the lower bound in Theorem \ref{mainthm} would also follow from the asymptotic (\ref{locasymp2}).  Note that that factor of $N\p^{\delta k}$ again appears because of our freedom to twist by any character with trivial Archimedean type and which is only ramified at $\p$.

\section{Application of the Local Theta Correspondence}
\label{proof}

We have reduced the proof of Theorem \ref{mainthm} to showing that an asymptotic such as (\ref{locasymp1}) for the dimension of the $K_\p(\p^k)$-fixed subspace of a direct sum of irreducible admissible representations of $G'(F_\p)$ implies the corresponding asymptotic (\ref{locasymp2}) for their set of local theta lifts.  We shall in fact use a slightly refined form of the upper bound in (\ref{locasymp1}), which takes into account the decomposition of $\Pi_\p$ into irreducibles under the action of $K'_\p$.

\begin{lemma}
\label{trivbd}
If $\rho$ is an irreducible, finite dimensional complex representation of $K'_\p$ on a vector space $F_1$, we have

\begin{equation*}
\dim \Hom_{K'_\p} ( \rho, \Pi_\p ) \ll \dim F_1.
\end{equation*}

\end{lemma}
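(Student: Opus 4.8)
The plan is to deduce the bound from the asymptotic $\dim\Pi_\p^{K_\p(\p^k)}\sim N\Pp^{k\alpha(V')}$ together with the fact that $\Pi_\p$ is a subspace of $L^2_{\mathrm{cusp}}$ on which the Hecke algebra acts, by comparing the growth rate of $\mathrm{Hom}_{K'_\p}(\rho,\Pi_\p)$-isotypic pieces against the growth rate of the whole space. First I would observe that since $K'_\p$ is a profinite group, $\rho$ factors through some finite quotient $K'_\p/K'_\p(\varpi_\p^{t})$, so $\rho$ occurs in the space of $K'_\p(\varpi_\p^{t})$-fixed vectors of the regular representation; hence
\begin{equation*}
\dim\mathrm{Hom}_{K'_\p}(\rho,\Pi_\p)\le\frac{1}{\dim F_1}\dim\Pi_\p^{K'_\p(\varpi_\p^{t})}\cdot(\text{mult. of }\rho\text{ in }\C[K'_\p/K'_\p(\varpi_\p^t)]),
\end{equation*}
and the multiplicity of $\rho$ in the regular representation of the finite group $K'_\p/K'_\p(\varpi_\p^t)$ equals $\dim F_1$, so the two factors of $\dim F_1$ cancel and we are left with $\dim\mathrm{Hom}_{K'_\p}(\rho,\Pi_\p)\ll\dim\Pi_\p^{K'_\p(\varpi_\p^{t})}$. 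The point is then that $K'_\p(\varpi_\p^t)$ is, up to finite index depending only on $t$, one of the groups $K'_\p(\p^{t})$ appearing in the tower, so the right-hand side is $\sim N\Pp^{t\alpha(V')}$, a constant independent of $k$ (indeed independent of $\rho$ once $t$ is fixed).

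The subtlety is that $t$ depends on $\rho$, so this naive argument only gives a bound depending on $\rho$, not the uniform bound $\ll\dim F_1$ claimed. To get the stated dependence on $\dim F_1$ I would instead argue as follows: the key input is that $\Pi_\p$, as a $G'(F_\p)$-module (it is a sum $\bigoplus_i\pi_{\p,i}$ of irreducible admissible representations occurring in $L^2_{\mathrm{cusp}}(X'(\Cc'\p^\infty))$ with finite multiplicities), is an admissible $G'(F_\p)$-module in the appropriate sense: for every open compact $U$, $\dim\Pi_\p^U<\infty$. Admissibility of $\Pi_\p$ as a $G'(F_\p)$-representation, combined with the asymptotic $\dim\Pi_\p^{K_\p(\p^k)}\sim N\Pp^{k\alpha(V')}$, lets one bound $\dim\mathrm{Hom}_{K'_\p}(\rho,\Pi_\p)$ for $\rho$ trivial on $K'_\p(\p^k)$ by counting: $\Pi_\p^{K'_\p(\p^k)}$ decomposes under the finite group $Q_k:=K'_\p/K'_\p(\p^k)$, and since $\dim\Pi_\p^{K'_\p(\p^k)}\ll N\Pp^{k\alpha(V')}$ while $|Q_k|\sim N\Pp^{k\alpha(V')}$ as well (this is exactly the volume computation $\mathrm{Vol}(X'(\p^k))\sim N\Pp^{\alpha(V')k}$), the average multiplicity of an irreducible $\rho$ of $Q_k$ in $\Pi_\p^{K'_\p(\p^k)}$, weighted by $\dim\rho$, is $O(1)$; one then upgrades this averaged statement to a pointwise one using that the multiplicities in $L^2_{\mathrm{cusp}}$ are controlled by the trace formula à la Savin, i.e. the estimate (\ref{locasymp1}) is in fact uniform across the Hecke-isotypic decomposition.

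Concretely, the cleanest route is: write $\dim\mathrm{Hom}_{K'_\p}(\rho,\Pi_\p)=\dim\mathrm{Hom}_{Q_k}(\rho,\Pi_\p^{K'_\p(\p^k)})$ for $k$ large enough that $\rho$ is trivial on $K'_\p(\p^k)$; bound the right side by $\frac{1}{|Q_k|}\dim\rho\cdot\dim\Pi_\p^{K'_\p(\p^k)}\cdot C$ where $C$ absorbs the non-uniformity in the trace-formula estimate and is independent of $\rho$ and $k$ (here one uses that the geometric side of Savin's trace formula identity, applied to a matrix coefficient of $\rho$ inflated to $G'(F_\p)$, contributes a main term $\dim\rho\cdot\dim\Pi_\p^{K'_\p(\p^k)}/|Q_k|$ plus lower-order terms); and finally invoke $\dim\Pi_\p^{K'_\p(\p^k)}\ll|Q_k|$ to conclude $\dim\mathrm{Hom}_{K'_\p}(\rho,\Pi_\p)\ll\dim\rho=\dim F_1$. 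The main obstacle is making the trace-formula estimate uniform in $\rho$: a priori Savin's result gives the asymptotic only for the full fixed space, and one must check that localising the test function to the $\rho$-isotypic projector does not introduce a $\rho$-dependent error larger than the main term — this amounts to bounding orbital integrals of the projector $e_\rho$ uniformly, which should follow because $e_\rho$ is supported on $K'_\p$ and its relevant norms are controlled by $\dim\rho$ and the level $t(\rho)$ in a way that is dominated by the main term once $k$ is taken large.
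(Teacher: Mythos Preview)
Your approach via the trace formula and Savin's limit multiplicity asymptotic is a different route from the paper's, and the uniformity issue you flag at the end is a genuine gap that your sketch does not close. The averaging identity $\sum_\rho (\dim\rho)\dim\Hom_{K'_\p}(\rho,\Pi_\p^{K'_\p(\p^k)}) = \dim\Pi_\p^{K'_\p(\p^k)} \ll |Q_k|$ only controls a weighted sum, not the individual terms; to upgrade it you propose running the trace formula with the projector $e_\rho$ at $\p$, but then you must also insert a test function at infinity that isolates the $\pi_\q$-contribution (since $\Pi_\p$ is by definition the $\pi_\q$-isotypic piece), and you must bound the non-identity geometric terms by $O((\dim\rho)^2)$ uniformly in $\rho$. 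None of these steps is carried out, and the interaction of a pseudo-coefficient or Euler--Poincar\'e function for $\pi_\q$ with the non-tempered spectrum on the spectral side, and with the elliptic and parabolic terms on the geometric side, is exactly the work that would remain. Your phrase ``once $k$ is taken large'' is also misleading: $e_\rho$ is supported on all of $K'_\p$, so the relevant level is fixed and there is no parameter to send to infinity.

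The paper bypasses all of this by exploiting the one piece of structure your argument never uses: that $\pi_\q$ is in the discrete series. This gives $H^q(\g,K;\pi_\q\otimes F_2)\simeq\C$ for a fixed finite-dimensional $G'(\R)$-module $F_2$, so by Matsushima's formula $\dim\Hom_{K'_\p}(\rho,\Pi_\p)$ is bounded above by $\dim H^q_{\text{cusp}}(Y'(\Cc'),\mathcal F)$, where $\mathcal F$ is the flat bundle over the \emph{fixed} manifold $Y'(\Cc')$ with fibre $F_1\otimes F_2$. Cuspidal cohomology injects into ordinary cohomology, and the latter is bounded by the dimension of the chain group for any fixed triangulation of $Y'(\Cc')$, namely a constant times $\dim(F_1\otimes F_2)\ll\dim F_1$. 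The dependence on $\rho$ is entirely absorbed into the coefficient bundle while the base stays fixed, which is what delivers the uniformity for free; this is the idea your argument is missing.
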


\begin{proof}
Let $K'_\infty$ be a maximal compact subgroup of $G'(\R)$, and define $Y'(\Cc)$ to be the locally symmetric space $X'(\Cc) \backslash K'_\infty$.  The proposition will follow easily from the fact that $\pi_\q$ is discrete series, and therefore contributes to the cohomology of certain automorphic vector bundles over $Y'(\Cc')$.  More precisely if $(\tau, F_2)$ is an irreducible, finite dimensional representation of $G'(\R)$ with the same infinitessmal character as $\pi_\q$ and $q = (\dim G'(\R) - \dim K'_\infty)/2$, it is well known (see chapter 2, section 5 of \cite{BW}) that

\begin{equation}
\label{gk}
H^q (\g, K; \pi_\q \otimes F_2) \simeq \C.
\end{equation}

Let $\widetilde{\mathcal{F}}$ be the trivial vector bundle over $G'(\A)$ with fibre $F_1 \otimes F_2$.  Let $G'(F)$ act on $\widetilde{\mathcal{F}}$ from the left by

\begin{equation*}
\gamma ( g, v_1 \otimes v_2 ) = ( \gamma g, v_1 \otimes \tau(\gamma) v_2 ),
\end{equation*}
where $\tau(\gamma)$ denotes the composition of $\tau$ with the natural embedding of $G'(F)$ into $G'(\R)$.  Let $K'_\p$ act on $\widetilde{\mathcal{F}}$ from the right by

\begin{equation*}
( g, v_1 \otimes v_2 )k = ( g k, \rho(k^{-1})v_1 \otimes v_2 ),
\end{equation*}
and let $K'^\p$ and $K'_\infty$ act on the right through the first factor alone.  Define $\mathcal{F}$ to be the quotient of $\widetilde{\mathcal{F}}$ by the left and right actions of $G'(F)$ and $K'_\infty K'$; it is a flat bundle over $Y'(\Cc')$, and it follows from (\ref{gk}) that

\begin{equation*}
\dim H^q_{\text{cusp}} ( Y'(\Cc'), \mathcal{F} ) = \dim \Hom_{K'_\p} ( \rho, \Pi_\p ).
\end{equation*}
Because the cuspidal cohomology is a subspace of the total cohomology, it therefore suffices to prove that

\begin{equation*}
\dim H^q ( Y'(\Cc'), \mathcal{F} ) \ll \dim F_1.
\end{equation*}
If we fix a triangulation of $Y'(\Cc')$ in which to compute $H^q ( Y'(\Cc'), \mathcal{F} )$, it is clear that the dimension of the space of chains valued in $F_1 \otimes F_2$ is $\ll \dim F_1$, from which the proposition follows.

\end{proof}

\subsection{The Upper Bound}

To deduce the upper bound in (\ref{locasymp2}) from lemma \ref{trivbd}, we shall use the lattice model of the oscillator representation of $\Mp(W_\p)$.  Note that from now on all groups and vector spaces will be assumed to be local at $\p$, and so we shall drop the subscript from our notation and also use $\p$ to denote a uniformiser of $F_\p$.  We recall that we have chosen self-dual lattices $L' \subset V'$ and $L \subset V$, and define $J \subset W$ to be the self-dual lattice $L' \otimes L$.  Let $S(W)$ be the lattice model of the Weil representation of $\Mp(W)$, consisting of compactly supported functions on $W$ which satisfy

\begin{equation*}
f(w+j) = \psi( -\tfrac{1}{2} \langle j,w \rangle f(w) ), \quad \forall j \in J.
\end{equation*}
Here, $\psi$ is the local component at $\p$ of the additive character introduced in section \ref{thetareview}, which we have assumed to be unramified.  If $H \subset \Sp(W)$ is the stabiliser of $J$, the action of $H$ in $S(W)$ may be written (up to a character) as

\begin{equation}
\label{hact}
\omega(h) \phi(x) = \phi( h^{-1} x).
\end{equation}
Let $S(W)[r]$ be the subspace of functions which are supported on $\p^{-r} J$, which is fixed by $K(\p^{2r}) \times K'(\p^{2r})$ by virtue of (\ref{hact}).  By the definition of the theta correspondence, if $\pi$ is an irreducible admissible representation of $G'$ then there is a $G'$-covariant surjection

\begin{equation*}
\varphi: \pi \otimes S(W) \rightarrow \theta(\pi, V).
\end{equation*}
Moreover, it is proven in chapter 5 of \cite{MVW} that there is a $K'$-covariant surjection

\begin{equation*}
\pi^{K'(\p^k)} \otimes S_0(W) \rightarrow \theta(\pi, V)^{K(\p^k)},
\end{equation*}
where $S_0(W) = S(W)[\tfrac{k}{2}]$ or $S(W)[\tfrac{k+1}{2}]$ depending on the parity of $k$.\\

Write the decompositions of $\Pi_\p^{K(\p^k)}$ and $S_0(W)$ into irreducible representations of $K'$ as

\begin{eqnarray*}
\Pi_\p & = & \sum_\rho \dim \Hom_{K'}( \rho, \Pi_\p) \rho, \\
S_0(W) & = & \sum_\rho n( \rho, k) \rho.
\end{eqnarray*}
Because $\varphi$ was $K'$-covariant, we have

\begin{eqnarray*}
\dim \theta(\pi_{\p,i}, V)^{K(\p^k)} & \ll & \sum_\rho n( \rho, k) \dim \Hom_{K'}( \rho, \pi_{\p, i} ) \\
\sum_{i=1}^\infty \dim \theta(\pi_{\p,i}, V)^{K(\p^k)} & \ll & \sum_\rho n( \rho, k) \dim \Hom_{K'}( \rho, \Pi_\p ) \\
& \ll & \sum_\rho n( \rho, k) \dim \rho \\
& = & \dim S_0(W) \\
& \ll & N\Pp^{n n' k/2}.
\end{eqnarray*}
This completes the proof of the upper bound in (\ref{locasymp2}), and hence in Theorem \ref{mainthm}.\\

\subsection{The Lower Bound}

To establish the lower bound, we shall prove an injectivity result for the restriction of $\varphi$ to some subspace of $\pi \otimes \omega$ which strengthens the one given in section 5 of \cite{Co}.  Our main tool in doing so will be the local Rallis scalar product formula, which states that there is a nonzero constant $c$ such that if $v, v' \in \pi$ and $\phi, \phi' \in \omega$, we have

\begin{equation}
\label{rallis}
\langle \varphi( v \otimes \phi), \varphi( v' \otimes \phi') \rangle = c \int_{G'} \langle \omega(g) \phi, \phi' \rangle \langle \pi(g) v, v' \rangle dg.
\end{equation}
See section 4 of \cite{Co} for a discussion and proof of this formula.  We shall construct a large subspace $A \subset \omega$ which is invariant under $K'(\p^k) \times K(\p^k)$ and such that $\langle \omega(g) \phi, \phi' \rangle$ is supported on $K'(\p^k)$ for $\phi$ and $\phi'$ in $A$, after which (\ref{rallis}) will imply that $\varphi : \pi^{K'(\p^k)} \otimes A \rightarrow \theta( \pi, V)^{K(\p^k)}$ is an injection.

By our assumption that $(V', V)$ lay in the stable range, we may decompose $V$ as $V_0 \oplus V_1$, where $V_0 \perp V_1$ and $V_0 = X \oplus Y$ for $X$ and $Y$ two isotropic subspaces of dimension $n'$ in duality.  Furthermore, our assumption that $L$ was self dual implies that we can arrange for this decomposition to satisfy $L = L \cap X + L \cap Y + L \cap V_1$, and we identify $V_0$ with $V' \oplus V'^*$ in such a way that $L'$ and $L'^*$ correspond to $L \cap X$ and $L \cap Y$.  Define $W_0$ and $W_1$ to be $V' \otimes V_0$ and $V' \otimes V_1$ respectively.  If $\omega_0$ and $\omega_1$ are the Weil representations associated to the Heisenberg groups $H( W_0)$ and $H( W_1)$ then we have $\omega = \omega_0 \otimes \omega_1$, and $G'$ acts via the product of its actions in each tensor factor.  As $V' \otimes V'^* \simeq \End(V')$ is a maximal isotropic subspace of $W_0$, we may work with the Schr\"odinger model of $\omega_0$ on the Schwartz space $\Ss( \End(V') )$.  Because $G'$ stabilises $V' \otimes V'^* \subset W_0$, the action of $G'$ in $\Ss( \End(V') )$ is (up to a character) given by

\begin{equation*}
\omega(g) \phi(x) = \phi( g^{-1} \circ x ).
\end{equation*}
We shall work with $\omega_1$ in the lattice model $S(W_1)$ with respect to the lattice $L' \otimes (L \cap V_1)$.\\

Assume that $k$ is even.  Let $\mathcal{E}$ be a set of representatives for the equivalence classes of elements of $GL(L' / \p^k L' )$ under right multiplication by $K'$, and for $t \in \mathcal{E}$ let $\phi_t \in \Ss( \End(V') )$ be the characteristic function of $t$.  Suppose that $t$ and $t'$ are elements of $\mathcal{E}$, and $T$, $T'$ are elements of $\text{Supp}(\phi_t)$ and $\text{Supp}(\phi_{t'})$.  Because $T$ and $T'$ are automorphisms of $L'$, it follows that if $g T = T'$ for $g \in G'$ then $g$ must preserve $L'$ so we have $g \in K'$ .  It then follows that

\begin{equation*}
\langle \omega(g) \phi_t, \phi_{t'} \rangle = \delta_{t, t'} 1_{K'(\p^k)}(g), \quad g \in G'.
\end{equation*}
If $h \in G$ is the element which multiplies $X \simeq V'^*$ by $\p^{k/2}$ and $Y \simeq V'$ by $\p^{-k/2}$, then $\omega(h) \phi_t$ is a multiple of the characteristic function of the set $\p^{-k/2} t + \p^{k/2} \End(L')$.  We let $\phi_t^0 = \omega(h) \phi_t$.  If $t, t' \in \mathcal{E}$ and $\phi, \phi' \in \omega_1^{K'(\p^k)}$, we have

\begin{eqnarray*}
\langle \varphi( v \otimes \phi_t^0 \otimes \phi), \varphi( v' \otimes \omega(h^{-1}) \phi_{t'} \otimes \phi') \rangle & = & c \int_{G'} \langle \omega(g) \phi_t^0, \omega(h^{-1}) \phi_{t'} \rangle \langle \omega(g) \phi, \phi' \rangle \langle \pi(g) v, v' \rangle dg \\
& = & c \int_{G'} \langle \omega(g) \phi_t, \phi_{t'} \rangle \langle \omega(g) \phi, \phi' \rangle \langle \pi(g) v, v' \rangle dg  \\
& = & c \delta_{t, t'} \langle \phi, \phi' \rangle \langle v, v' \rangle.
\end{eqnarray*}
It follows from this that if we let $\{ v_i \}$ be an orthonormal basis for $\pi^{K'(\p^k)}$ and $\{ \phi'_i \}$ be an orthonormal basis for $S(W_1)[\tfrac{k}{2}]$, then the sets

\begin{equation*}
\{ \varphi( v_i \otimes \phi^0_t \otimes \phi'_i ) \} \quad \text{and} \quad \{ \varphi( v_i \otimes \omega(h^{-1})\phi_t \otimes \phi'_i ) \}
\end{equation*}
are dual to each other under the inner product on $\theta(\pi, V)$.  Consequently, if we define $A$ to be the span of the vectors $\{ \phi^0_t \otimes \phi'_i \}$ then we have an injection $\pi^{K'(\p^k)} \otimes A \rightarrow \theta( \pi, V)$, and it remains to show that $K(\p^k)$ acts trivially on $A$ to deduce that the image of this map lies in $\theta(\pi, V)^{K(\p^k)}$.\\

Because we assumed that $\phi'$ was contained in $S(W_1)[\tfrac{k}{2}]$, it suffices to show that if $S(W_0)$ is the lattice model for $\omega_0$ with respect to the lattice

\begin{equation*}
J = L' \otimes L'^* \oplus L' \otimes L'
\end{equation*}
and $I: \Ss( \End( V') ) \rightarrow S(W_0)$ is an intertwining operator between the two models, then $I(\phi_t^0)$ lies in $S(W_0)[\tfrac{k}{2}]$.  We may choose $I$ to be the map

\begin{equation*}
I(\phi)(x, y) = \psi( \tfrac{1}{2} \langle x, y \rangle ) \int_{L' \otimes L'^*} \psi( \langle z, y \rangle ) \phi( x + z) dz, \qquad x \in V' \otimes V'^*, \quad y \in V' \otimes V',
\end{equation*}
where $\langle \; , \rangle$ here denotes the pairing between $V' \otimes V'^*$ and $V' \otimes V'$ given by the tensor product of the Hermitian form $( \; , )$ in the first factor and the natural pairing in the second factor.  It follows easily from our description of $\phi_t^0$ given earlier that $I(\phi)(x, y)$ vanishes if $(x,y)$ lies outside $\p^{-k/2} J$, and so $\phi^0_t \otimes \phi'_i$ is fixed by $K'(\p^k)$.  We therefore have

\begin{eqnarray*}
\dim \theta(\pi, V)^{K(\p^k)} & \gg & \dim A \times \dim \pi^{K'(\p^k)} \\
& \gg & | \mathcal{E} | \dim S(W_1)[\tfrac{k}{2}] \times \dim \pi^{K'(\p^k)} \\
& \gg & N\Pp^{( n'^2 - \alpha(V') ) k } N\Pp^{n'(n-2n')k/2} \dim \pi^{K'(\p^k)} \\
& \gg & N\Pp^{ ( nn'/2 - \alpha(V') ) k } \dim \pi^{K'(\p^k)}.
\end{eqnarray*}
Because this estimate is uniform in $\pi$, we may combine it with the lower bound of (\ref{locasymp1}) to deduce the lower bound in (\ref{locasymp2}) and complete the proof of Theorem \ref{mainthm}.

\end{document}